\numberwithin{equation}{section}
\newcommand{\N}{\mathbb{N}}
\newcommand{\R}{\mathbb{R}}
\renewcommand{\d}{{\mathrm d}}
\newcommand{\nchi}{{\raise.3ex\hbox{\(\chi\)}}}
\newcommand{\fr}{\penalty-20\null\hfill\(\blacksquare\)}
\newcommand{\mm}{\mathfrak m}
\newcommand{\Lip}{{\rm Lip}}
\newcommand{\lip}{{\rm lip}}
\newcommand{\LIP}{{\rm LIP}}
\newcommand{\aint}[2][]{
	\ifthenelse{\equal{#1}{}}
					{
\mathchoice%
      {\mathop{\kern 0.2em\vrule width 0.6em height 0.69678ex depth -0.58065ex
              \kern -0.8em \intop}\nolimits_{\kern -0.45em#2}^{#1}}%
      {\mathop{\kern 0.1em\vrule width 0.5em height 0.69678ex depth -0.60387ex
              \kern -0.6em \intop}\nolimits_{#2}^{#1}}%
      {\mathop{\kern 0.1em\vrule width 0.5em height 0.69678ex depth -0.60387ex
              \kern -0.6em \intop}\nolimits_{#2}^{#1}}%
      {\mathop{\kern 0.1em\vrule width 0.5em height 0.69678ex depth -0.60387ex
              \kern -0.6em \intop}\nolimits_{#2}^{#1}}}%
					{%
\mathchoice%
      {\mathop{\kern 0.2em\vrule width 0.6em height 0.69678ex depth -0.58065ex
              \kern -0.8em \intop}\nolimits_{\kern -0.45em#1}^{#2}}%
      {\mathop{\kern 0.1em\vrule width 0.5em height 0.69678ex depth -0.60387ex
              \kern -0.6em \intop}\nolimits_{#1}^{#2}}%
      {\mathop{\kern 0.1em\vrule width 0.5em height 0.69678ex depth -0.60387ex
              \kern -0.6em \intop}\nolimits_{#1}^{#2}}%
      {\mathop{\kern 0.1em\vrule width 0.5em height 0.69678ex depth -0.60387ex
              \kern -0.6em \intop}\nolimits_{#1}^{#2}}}}
\newtheorem{theorem}{Theorem}[section]
\newtheorem{corollary}[theorem]{Corollary}
\newtheorem{lemma}[theorem]{Lemma}
\newtheorem{proposition}[theorem]{Proposition}
\newtheorem{definition}[theorem]{Definition}
\newtheorem{remark}[theorem]{Remark}
\title{Abstract and concrete tangent modules on Lipschitz differentiability spaces}
\author{Toni Ikonen}
\address{Department of Mathematics and Statistics,
P.O.\ Box 35 (MaD), FI-40014 University of Jyvaskyla}
\email{toni.m.h.ikonen@jyu.fi}
\author{Enrico Pasqualetto}
\address{Scuola Normale Superiore, Piazza dei Cavalieri 7,
56126 Pisa, Italy}
\email{enrico.pasqualetto@sns.it}
\author{Elefterios Soultanis}
\address{Radboud University, Department of Mathematics, PO Box 9010, Postvak 59, 6500 GL Nijmegen, The Netherlands}
\email{elefterios.soultanis@gmail.com}
\begin{document}
\date{\today} 
\keywords{Lipschitz differentiability space, rectifiable space, Sobolev space, tangent module}
\subjclass[2020]{53C23, 46E35, 49J52}
\begin{abstract}
We construct an isometric embedding from Gigli's abstract tangent module into the concrete tangent module of a space admitting a (weak) Lipschitz differentiable structure, and give two equivalent conditions which characterize when the embedding is an isomorphism. 
Together with arguments from \cite{BKO19}, this equivalence is used to show that the $\Lip-\lip$ -type condition $\lip f\le C|Df|$
self-improves to $\lip f =|Df|$.

We also provide a direct proof of a result in \cite{GP16} that, for a space with a strongly rectifiable decomposition, Gigli's tangent module admits an isometric embedding into the so-called Gromov--Hausdorff tangent module, without any a priori reflexivity assumptions.
\end{abstract}
\maketitle
\section{Introduction}

\subsection{Overview}
To study spaces with synthetic Ricci curvature bounds, Gigli \cite{Gigli14} developed a notion of abstract (normed) tangent module $L^2(TX)$ over a metric measure space $X=(X,d,\mm)$, that is, a complete separable metric space $(X,d)$ equipped with a Radon measure $\mm$ which is finite on bounded sets. The tangent (and cotangent) module allows for a first order differential calculus in this very general setting, at the cost of being highly abstract. Indeed, ``vector fields'' in $L^2(TX)$ are defined as elements in the module dual of an abstract object spanned by formal differentials of Sobolev functions (the cotangent module).

A level of concreteness can be recovered under some rectifiability conditions on $X=(X,d,\mm)$. In \cite{GP16} the second author and Gigli considered spaces admitting a \textbf{strongly rectifiable decomposition} (called $\mm$-rectifiable in \cite{GP16}). Assuming that $W^{1,2}(X)$ is reflexive, they prove that $L^2(TX)$ isometrically embeds into the module $L^2(T_{GH}X)$ of 2-integrable sections of the \textbf{Gromov--Hausdorff} tangent bundle $T_{GH}X\coloneqq\bigsqcup_{k\in\N}A_k\times\R^k$, cf. \cite[Theorem 5.1]{GP16}. Here the Borel sets $A_k$ partition $X$, each $\mm|_{A_k}$ is $k$-rectifiable, and each fiber is equipped with the standard Euclidean norm. It follows that $W^{1,2}(X)$ is a Hilbert space (i.e.\ $X$ is infinitesimally Hilbertian) but, as conjectured in \cite{GP16}, assuming that $W^{1,2}(X)$ is reflexive is unnecessary and in fact already follows from having a strongly rectifiable decomposition (see Corollary \ref{cor:reflexive}). We also mention \cite{pas-luc18}, where the authors show that a finitely generated normed module can be viewed as the space of sections of a suitable measurable Banach bundle, but the latter is in general unrelated to the structure of the underlying space.

More generally, the work of Cheeger \cite{Cheeger00} shows that the reflexivity of $W^{1,p}(X)$ is implied by the existence of chart maps on $X$ for which a generalized Rademacher theorem holds. This naturally leads to the notion of \textbf{Lipschitz differentiability spaces} (abbreviated LDS) which has been the focus of extensive research over the recent decade \cite{Keith04,Bate15,BateLi17,Schioppa14,Schioppa16:A,Schioppa16:B}. Lipschitz differentiability spaces include non-rectifiable spaces (e.g.\ the Heisenberg groups), but a close connection with rectifiability exists: \emph{a space is $n$-rectifiable if and only if it is a countable union of $n$-dimensional LDS's}, cf.\ \cite{BateLi17}. The construction in \cite{Cheeger00} (see also \cite{Keith04}) can be carried out if the underlying space decomposes into a countable union of  Lipschitz differentiability spaces, producing a ``concrete'' tangent bundle and differential. The subtle point here is that the countable union of LDS's is not generally an LDS, see the discussion in \cite[Introduction]{BateLi17} and in Section \ref{sec:lds}.
In this paper we call such spaces \textbf{weak Lipschitz differentiability spaces} (abbreviated weak LDS).

In this note we describe how the (adjoint of the) concrete differential on a weak LDS gives rise to an isometric embedding from the abstract into the concrete tangent module, see Theorem \ref{thm:isomA}.
This embedding is not always surjective (see the discussion after Theorem \ref{thm:module:LDS}); sufficient conditions include the Poincar\'e inequality \cite{Cheeger00} and its asymptotic forms, cf.\ Section \ref{sec:lds}. Theorem \ref{thm:isomB} gives two equivalent conditions,
in the spirit of Bate--Kangasniemi--Orponen's work \cite{BKO19}, which characterize surjectivity. The equivalence of the two conditions yields a self-improvement phenomenon similar to the self-improving of Keith's $\Lip - \lip$ condition \cite{Schioppa16:A}. Indeed, consider the condition
\begin{equation}\label{eq:BKO0}
\lip f\le C|Df|_p,\quad f\in \LIP_{bs}(X),
\end{equation}
adapted from \cite{BKO19}. Here $\LIP_{bs}(X)$ is the space of Lipschitz functions with bounded support on $X$, $\lip f$ is the pointwise Lipschitz constant and $|Df|_p$ the minimal $p$-weak upper gradient of $f$ (cf.\ Section \ref{sec:pre}). Note that condition \eqref{eq:BKO0} is reminiscent of, but stronger than, Keith's $\Lip-\lip$ condition \cite{Keith04} and (up to technical assumptions) both conditions imply the existence of a Lipschitz differentiable structure, see \cite[Theorem 1.4]{BKO19} and \cite[Corollary 10.5]{Bate15} respectively. We show that \eqref{eq:BKO0} characterizes the surjectivity of the isometric embedding and consequently self-improves to an equality, cf.\ Theorems \ref{thm:isomB} and \ref{thm:module:LDS}.

\subsection{Statement of results}
Let $X$ be a metric measure space. A pair $( U, \varphi )$, consisting of a Borel set $U \subset X$ with $\mm( U ) > 0$ and a Lipschitz function $\varphi \colon X \rightarrow \mathbb{R}^{k}$, is called a {\bf (strong) chart} of dimension $k$ if, for every $f \in \LIP(X)$ and $\mm$-a.e.\ $x \in U$, there exists a unique linear map $\d_{x}f \colon \mathbb{R}^{k} \rightarrow \mathbb{R}$ satisfying
\begin{equation}
\label{eq:differential:OG}
\lip( f - \d_{x}f \circ \varphi )(x)
=
0.
\end{equation}
A metric measure space is called a \textbf{Lipschitz differentiability space} (LDS) if it can be covered by a countable collection of charts (of arbitrary dimension). Our first result illustrates the self-improving property of \eqref{eq:BKO0}. In the statement, a \textbf{modulus of continuity} is a continuous increasing function $\omega\colon[0,\infty)\to [0,\infty)$ with $\omega(0)=0$.
\begin{theorem}\label{thm:module:LDS}
Let $X$ be a metric measure space and $\omega = \left\{ \omega_{x} \right\}_{ x \in X }$ a collection of moduli of continuity with the following property for some $1 < p < \infty$: For every $f \in \LIP_{bs}(X)$
\begin{equation}
	\label{eq:BKO}
	\lip(f)(x)
	\leq
	\omega_{x}( |Df|_{p}(x) ),
	\quad\text{ for }\mm\text{-a.e.\ }x\in X.
\end{equation}
Then $X$ is a Lipschitz differentiability space, and 
\begin{equation}\label{eq:lipeq}
\lip(f) = |Df|_{p'}\quad\mm\textrm{-a.e.\ in }X
\end{equation}
for every $f \in \LIP_{bs}(X)$ and every $p' \geq p$.
\end{theorem}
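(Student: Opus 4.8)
The plan is to run the argument in two stages: first show that hypothesis \eqref{eq:BKO} forces $X$ to be a Lipschitz differentiability space, then leverage the module-isomorphism criterion (Theorem \ref{thm:isom}) to upgrade the pointwise bound to the equality \eqref{eq:lipeq}, and finally observe that once the identity holds for one exponent $p$ it propagates to every $p' \geq p$. The starting point for the first stage is that \eqref{eq:BKO} is, up to composing with the concave envelope of $\omega_x$, a pointwise $\Lip$--$\lip$ inequality: since $\lip f \geq \lip^* f$ controls the asymptotic Lipschitz constant from below and $|Df|_p$ is dominated by $\Lip(f,\cdot)$ for bounded-support Lipschitz $f$, one gets a genuine inequality of the form $\lip(f) \leq \tilde\omega_x(\Lip f)(x)$ after localizing. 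This is precisely the hypothesis under which the arguments of \cite{BKO19} (building on \cite{Cheeger00, Keith04, Schioppa16:A}) produce a countable atlas of charts: the existence of a measurable differentiable structure follows because the Sobolev space becomes reflexive and one can extract chart functions by a maximal-dimension / linear-independence argument. I would cite this step rather than reprove it, flagging only that one must first pass from $\omega_x$ to its concave, hence subadditive-ish, modification and check the resulting bound is still measurable in $x$ — the collection of moduli being merely Borel-measurable in $x$ is enough.

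For the second stage, having a chart structure means the concrete tangent module $L^p(TX)$ of Section \ref{sec:lds} is defined, and by Theorem \ref{thm:isom} there is a canonical isometric embedding $\iota\colon L^p(T^*X)_{\mathrm{abs}} \hookrightarrow L^p(T^*X)_{\mathrm{concr}}$ (equivalently on tangent modules), with two equivalent characterizations of when $\iota$ is onto; one of these is exactly a bound of the type \eqref{eq:BKO0}, i.e.\ $\lip f \leq C|Df|_p$. Here the hypothesis gives us the $\omega_x$-version, so I would check that the proof of the relevant implication in Theorem \ref{thm:isom} only uses the bound \eqref{eq:BKO} pointwise and at infinitesimal scale, where $\omega_x(t)/t \to$ the relevant derivative behaves like a constant — more precisely, one applies \eqref{eq:BKO} to the rescaled/truncated functions $f - \d_x f\circ\varphi$ whose minimal weak upper gradient tends to $0$ at $x$, and uses $\omega_x(0)=0$ together with continuity of $\omega_x$ to conclude $\lip(f - \d_x f\circ\varphi)(x) = 0$. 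This simultaneously re-derives the uniqueness of the differential and shows $\iota$ is surjective, hence an isometric isomorphism of modules. Surjectivity of the embedding is equivalent to the statement that $|Df|_p$ computed abstractly agrees with the concrete norm $|\d f|$, and the concrete norm is, by the chart construction and the differentiability identity \eqref{eq:differential:OG}, equal to $\lip f$ pointwise $\mm$-a.e. This yields \eqref{eq:lipeq} for the exponent $p$.

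To extend to all $p' \geq p$, I would use the general inequalities $|Df|_{p'} \leq |Df|_p$ (the minimal weak upper gradient is non-increasing in the exponent, since $p'$-test plans form a subclass) together with $|Df|_{p'} \geq \lip f$ being false in general but $|Df|_{p'} \leq \Lip(f,\cdot)$ and, crucially, $\lip f \leq |Df|_p = |Df|_{p'}$: once we know $\lip f = |Df|_p$ and $|Df|_{p'}\le |Df|_p=\lip f\le |Df|_{p'}$, all quantities coincide, where the last inequality $\lip f\le |Df|_{p'}$ comes from applying the already-established hypothesis-driven bound at exponent $p$ and the monotonicity. Here the only delicate point is making sure the chain of inequalities closes, i.e.\ that $|Df|_{p'} \geq \lip f$; this follows because the chart structure and Theorem \ref{thm:isom} at exponent $p$ give $\lip f = |Df|_p$, and $|Df|_p \leq |Df|_{p'}$ would go the wrong way, so instead I reverse it: from $\lip f = |Df|_p \geq |Df|_{p'} \geq \lip f$, forcing equality throughout — the middle inequality is monotonicity of weak upper gradients in $p$, and the outer inequality $|Df|_{p'} \geq \lip f$ is the one requiring an argument, which I would obtain by noting that the concrete differential (built from the charts, independent of exponent) always satisfies $|\d f| = \lip f$ and is dominated by $|Df|_{p'}$ for every $p'$.

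The step I expect to be the genuine obstacle is verifying that the proof of surjectivity in Theorem \ref{thm:isom} tolerates the $\omega_x$-modulus in place of a linear bound $C|Df|_p$: one must control the behavior of $\omega_x$ under the infinitesimal rescalings used to compute $\lip$, and ensure no uniformity in $x$ is secretly needed. Because $\omega_x(0)=0$ and $\omega_x$ is continuous, applying \eqref{eq:BKO} to functions with $|Df|_p(x)$ arbitrarily small at a fixed (density) point $x$ should suffice, but the measurability of the exceptional sets as $x$ varies, and the interaction with the almost-everywhere quantifier in \eqref{eq:differential:OG}, will need care. A secondary subtlety is the reduction from general $f \in \LIP_{bs}(X)$ to the chart functions and back, using locality of both $\lip$ and the minimal weak upper gradient.
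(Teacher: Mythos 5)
Your overall plan (obtain the LDS structure via \cite{BKO19}, use Theorem \ref{thm:isom} to upgrade \eqref{eq:BKO} to the equality $\lip f = |Df|_p$, then pass to $p'\ge p$) captures the skeleton of the paper's argument, which routes through Theorem \ref{cor:BKO} applied to the trivial partition $\{X\}$. But you never address porosity, which is load-bearing in the paper's proof. Theorem \ref{cor:BKO} together with Theorem \ref{thm:isom} only produces a \emph{weak} LDS structure and the chart-local identity $\lip(f|_U)=|Df|_{p'}$ on each $U\in\mathscr A$; passing from there to the global statement $\lip f = |Df|_{p'}$ of \eqref{eq:lipeq}, and upgrading weak charts to strong ones so that $X$ is genuinely an LDS, both require that $\mm$ vanishes on porous sets (cf.\ \cite[Prop.\ 2.8, Thm.\ 2.4]{BS13} and the discussion opening Section \ref{sec:lds}). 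The paper deduces this directly from \eqref{eq:BKO}: for a porous set $S$, $g\coloneqq d(\cdot,S)$ has $\lip g>0$ everywhere on $S$ by Remark \ref{rmk:por}, yet $|Dg|_p=0$ $\mm$-a.e.\ on $S$ by locality \eqref{eq:sobloc}, so \eqref{eq:BKO} forces $\mm(S)=0$. You implicitly assume null porosity when asserting ``the concrete norm is \ldots\ equal to $\lip f$ pointwise $\mm$-a.e.'', and your Stage-1 reduction to a Keith-type $\Lip$--$\lip$ inequality also presupposes pointwise doubling, which is again only available after establishing null porosity via Theorem \ref{thm:porous:infdoub}.

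Secondly, your chain for the $p'$-extension has both intermediate inequalities stated in the wrong direction. The minimal weak upper gradient is non-\emph{decreasing} in the exponent: for $p\le p'$ one has $|Df|_p\le |Df|_{p'}$ (on bounded supports $L^{p'}\subset L^p$, so $p'$-negligible path families are $p$-negligible; equivalently, $p$-test plans form a subclass of $p'$-test plans, not the other way around). Likewise, the concrete differential dominates, rather than is dominated by, the minimal weak upper gradient: $|Df|_{p'}\le |\underline\d f|_{\mathscr A}^*\le \lip f$, cf.\ \eqref{eq:concrete_diff_wug21}. The correct chain is $\lip f = |Df|_p \le |Df|_{p'} \le \lip f$, with the first step monotonicity and the second the fact that $\lip f$ is an upper gradient of $f$. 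Your two reversed inequalities happen to cancel and the endpoints coincide, but neither step is justified as written, and the appeal to ``$|\underline\d f|$ dominated by $|Df|_{p'}$'' is false.
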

The conclusion that $X$ is a Lipschitz differentiability space is essentially contained in \cite[Theorem 1.4]{BKO19}; in this regard,
the only novelty is that the assumption of finite Hausdorff dimension is superfluous. Nevertheless, the main contribution of Theorem \ref{thm:module:LDS} is the self-improvement of \eqref{eq:BKO} to \eqref{eq:lipeq}. We point out that \eqref{eq:lipeq} does not necessarily extend to every $1 < p' <p$: for every $n \in \mathbb{N}$ and $\alpha > 0$, there exists a measure $\mu = \omega \mathcal{L}^{n}$ on $\mathbb{R}^{n}$ such that $X = ( \mathbb{R}^{n},|\cdot|, \mu )$ satisfies \eqref{eq:lipeq} for every $p > 1 + \alpha$, but $|Df|_{p'} \equiv 0$ for every $f \in \LIP_{bs}( X )$ and $p' \leq 1 + \alpha$; see \cite[Theorem 1.1]{DiMarinoSpeight15}.

The equality \eqref{eq:lipeq} was originally observed in PI spaces \cite[Sections 5 and 6]{Cheeger00}. Note that \eqref{eq:BKO} is not valid in every LDS: if $K \subset \mathbb{R}^{n}$ is a Cantor set with $\mathcal L^n(K)>0$, then $X = ( K, | \cdot |, \mathcal{L}^{n}|_{K} )$ is an LDS with $|Df|_p\equiv 0$ for any $f\in \LIP(X)$ and $p\ge 1$ (since $X$ does not contain any rectifiable paths). By comparison, Keith's $\Lip-\lip$ condition -- which satisfies an analogous self-improvement property stated in Theorem \ref{thm:module:LDS} -- characterizes LDS's (for pointwise doubling measures). This demonstrates that \eqref{eq:BKO} guarantees better connectivity properties (and also the pointwise doubling property of measures). Some sufficient conditions for \eqref{eq:BKO} are discussed in Section \ref{sec:lds}.

To obtain Theorem \ref{thm:module:LDS} we construct an isometric embedding from the abstract tangent module into the concrete one. The construction is carried out on {\bf weak Lipschitz differentiability spaces} (weak LDS), that is, metric measure spaces which can be covered (up to a null-set) by {\bf (weak) charts} $(U,\varphi)$ consisting of a Borel set \(U\) with $\mm(U)>0$ and a Lipschitz function $\varphi\colon X\to \R^k$ such that, for any $f \in \LIP(X)$ and $\mm$-a.e.\ $x \in U$, there exists a unique linear map $\d_xf:\R^k\to \R$ satisfying
\begin{align}\label{eq:lds}
\lip(f|_U-\d_xf\circ\varphi|_{U})(x)=0.
\end{align}
A collection $\mathscr A=\{(U_n,\varphi_n)\}$ of $k_n$-dimensional (weak) charts partitioning $X$ up to a null-set is called an {\bf atlas} of $X$, and gives rise to a concrete tangent bundle $T_{\mathscr A}X=\bigsqcup_{n\in\N} U_n\times\R^{k_n}$ where, for $\mm$-a.e.\ $x\in U_n$, $\{x\}\times \R^{k_{n}}$ is equipped with the dual norm of $L \mapsto \lip(L\circ\varphi_n|_{U_n})$, denoted $|\cdot|_{\mathscr A,x}$. The associated space of $q$-integrable sections ($1<q<\infty$) is independent of the chosen atlas, up to isometric isomorphism, and we denote it $\Gamma_q(TX)$; see Section \ref{sec:conc}.

We refer the reader to \cite{Gigli14} for the construction of the Gigli tangent module $L^{q}( TX )$, and mention here that elements $v \in L^{q}( TX )$ are in one-to-one correspondence with linear maps $V \colon N^{1,p}( X ) \rightarrow L^{1}( \mm )$ ($1/p + 1/q = 1$) for which there exists $\ell \in L^{q}( \mm )$ with $| V( f ) | \leq \ell | Df |_{p}$ for every $f \in N^{1,p}(X)$. Here $N^{1,p}(X)$ refers to the Sobolev space on $X$ with exponent $1 < p < \infty$, see Section \ref{sec:newton}.

\begin{theorem}\label{thm:isomA}
	Let $X=(X,d,\mm)$ be a weak Lipschitz differentiability space, and $1<p,q<\infty$, $1/p+1/q=1$. Then there exists an isometric embedding $\iota\colon L^q(TX)\to \Gamma_q(TX)$ of normed modules
	satisfying
	\begin{align*}
	\d_xf(\iota(V)(x))=V(f)(x)\quad \mm\textrm{-a.e.\ }x\in X
	\end{align*}
	for every $f\in \LIP_{bs}(X)$ and $V\in L^q(TX)$.
\end{theorem}	
	
\begin{theorem}\label{thm:isomB}
The embedding $\iota$ in Theorem \ref{thm:isomA} is an isometric isomorphism if and only if one of the following equivalent conditions hold.
	\begin{itemize}
		\item[(1)] We have $\lip(f|_U) = |Df|_p$ $\mm$-a.e.\ in $U$ for each weak chart $(U,\varphi)$ and every $f\in \LIP_{bs}(X)$;
		\item[(2)] There is a collection $\omega=\{\omega_x\}_{x\in X}$ of moduli of continuity so that $\lip(f|_U)\le \omega(|Df|_p)$ $\mm$-a.e.\ on $U$ for each weak chart $(U,\varphi)$ and every $f\in \LIP_{bs}(X)$.
	\end{itemize}
\end{theorem}
As the proof will show, conditions (1) and (2) are satisfied if and only if they are satisfied for each chart in a given atlas. We point out the following corollary which, although implicitly well known, seems to not have been explicitly stated in the literature.
\begin{corollary}\label{cor:reflexive}
	If $X=(X,d,\mm)$ is a weak LDS metric measure space, then $N^{1,p}(X)$ is reflexive for any $1<p<\infty$.
\end{corollary}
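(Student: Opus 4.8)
The plan is to deduce reflexivity of $N^{1,p}(X)$ from the structure provided by Theorem \ref{thm:isom}, together with the standard fact that $L^p$-type spaces modelled on reflexive fibers over a $\sigma$-finite measure space are reflexive. The key observation is that $N^{1,p}(X)$ embeds isometrically into $L^p(\mm) \times L^p(T^*X)$ via $f \mapsto (f, df)$, where $L^p(T^*X)$ is the cotangent module (the predual construction to $L^q(TX)$). Indeed, this map is linear, and $\|(f,df)\|^p = \|f\|_{L^p(\mm)}^p + \||df|\|_{L^p(\mm)}^p = \|f\|_{L^p(\mm)}^p + \||Df|_p\|_{L^p(\mm)}^p$ is (an equivalent norm to) the Sobolev norm, with closed image since $N^{1,p}(X)$ is complete. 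Thus it suffices to show that $L^p(T^*X)$ is reflexive, because a closed subspace of a reflexive space is reflexive and finite products of reflexive spaces are reflexive.

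To see that $L^p(T^*X)$ is reflexive, I would invoke the duality $L^p(T^*X)^* \cong L^q(TX)$ (this is part of Gigli's construction: the cotangent module over a metric measure space is always reflexive, or at least its dual is the tangent module and the natural map into the bidual is surjective — this uses separability of $N^{1,p}(X)$ and local finite-dimensionality is \emph{not} needed here). Actually the cleanest route avoids even this: use Theorem \ref{thm:isom} to get the isometric embedding $\iota \colon L^q(TX) \to \Gamma_q(TX)$, and dually an isometric embedding (in fact one should check it is a quotient map or handle the image carefully) relating $L^p(T^*X)$ to $\Gamma_p(T^*X) = \{p\text{-integrable sections of } T^*_{\mathscr A}X\}$. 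The concrete bundle $T_{\mathscr A}X = \bigsqcup_n U_n \times \R^{k_n}$ has finite-dimensional fibers, each with \emph{some} norm $|\cdot|_{\mathscr A,x}$; since all norms on $\R^{k_n}$ are reflexive (finite-dimensional), $\Gamma_p(TX)$ and $\Gamma_p(T^*X)$ are reflexive as $L^p$-direct integrals of uniformly finite-dimensional spaces over the $\sigma$-finite space $(X,\mm)$ (measurability of the norm field is part of the atlas data; uniform convexity/smoothness up to renorming on each $U_n$ can be arranged, or one appeals directly to the measurable Banach bundle reflexivity criterion). Hence $L^q(TX)$, being isometric to a closed submodule of the reflexive space $\Gamma_q(TX)$, is reflexive, and therefore so is its predual $L^p(T^*X)$, and finally $N^{1,p}(X)$.

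The main obstacle I anticipate is the measurability/renorming bookkeeping needed to conclude that $\Gamma_q(TX)$ (equivalently $\Gamma_p(T^*X)$) is reflexive: one must know that the fiberwise norms $|\cdot|_{\mathscr A,x}$ vary measurably in $x$ and that, although they are not Euclidean a priori, one can either apply a measurable selection of Auerbach/John-type reductions to uniformly renorm each $\R^{k_n}$-bundle, or directly cite that an $L^p$-space of sections of a measurable bundle with essentially bounded finite dimension over a $\sigma$-finite measure space is reflexive for $1 < p < \infty$. Once that is in place, the deduction for $N^{1,p}(X)$ is immediate via the closed-subspace and predual arguments above; alternatively, one can bypass the predual step entirely by noting $N^{1,p}(X) \hookrightarrow L^p(\mm) \times \Gamma_p(T^*X)$ isometrically (composing $f \mapsto (f,df)$ with the dual of $\iota$), which lands directly in a reflexive space.
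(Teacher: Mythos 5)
Your main argument matches the paper's proof: use Theorem \ref{thm:isom} to embed $L^q(TX)$ isometrically into $\Gamma_q(TX)$, observe that $\Gamma_q(TX)$ is reflexive because its fibers are uniformly finite-dimensional (exactly the content of Remark \ref{rmk:concdual}, or alternatively \cite[Theorem 1.4.7]{Gigli14} via local finite generation), conclude $L^q(TX)$ is reflexive, and transfer this to $N^{1,p}(X)$ (the paper delegates this last transfer to \cite[Proposition 2.2.10]{Gigli14}, while you spell out the equivalent predual-plus-closed-embedding chain).

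Two small caveats worth flagging even though they do not affect your main line of reasoning. First, the parenthetical claim that ``the cotangent module over a metric measure space is always reflexive'' is false --- this is precisely what can fail on a general metric measure space and what the weak LDS hypothesis is there to guarantee; you correctly abandon this and pass to the $\Gamma_q$ argument. Second, the alternative route sketched at the end is backwards: the dual of $\iota\colon L^q(TX)\to\Gamma_q(TX)$ is a map $\Gamma_p(T^*X)\to L^p(T^*X)$ (it is, in fact, the submetry $P$), not an embedding in the other direction, so one cannot directly factor $f\mapsto(f,df)$ through $L^p(\mm)\times\Gamma_p(T^*X)$ without first establishing closability of $\underline\d$ on $N^{1,p}(X)$.
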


Theorem \ref{thm:module:LDS} is obtained from the equivalence of (1) and (2) in Theorem \ref{thm:isomB} and the fact that the pointwise Lipschitz constant satisfies the same locality properties as the minimal weak upper gradient in an LDS: 
\begin{equation}\label{eq:liploc}
	\lip( f ) = 0\quad\mm\textrm{-a.e.\ in }U,
\end{equation}
for any Borel $U \subset X$ and $f \in \LIP(X)$ vanishing on $U$ (compare \eqref{eq:sobloc}). Indeed, a weak LDS whose measure vanishes on porous sets is an LDS and this null-porosity yields \eqref{eq:liploc}. We refer to Section \ref{sec:lds} for details, and formulate the following variant of Theorem  \ref{thm:module:LDS} in the weak LDS setting.
 
\begin{theorem}\label{cor:BKO}
	Let $X=(X,d,\mm)$ be a metric measure space and $1<p<\infty$. Suppose there is a collection $\omega=\{\omega_x\}$ of moduli of continuity, and Borel sets $\{U_k\}$ partitioning $X$ up to a null-set, such that $\lip(f|_{U_k})\le \omega(|Df|_p)$  $\mm$-a.e.\ on $U_k$ for every $f\in \LIP_{bs}(X)$. Then $X$ is a weak LDS satisfying (1) in Theorem \ref{thm:isomB} for every $p' \geq p$.
\end{theorem}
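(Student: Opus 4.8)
The plan is to use the hypothesis to establish the existence of a weak Lipschitz differentiable structure subordinate to $\{U_k\}$, by running the argument of \cite{BKO19} on each piece, and then to read off condition (1) of Theorem \ref{thm:isom} at every exponent $p'\ge p$ from the equivalence (1)$\iff$(2) together with the monotonicity of $p'\mapsto|Df|_{p'}$.

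First I would reduce to a single Borel set carrying a single modulus. Covering $X=\bigsqcup_k U_k$ up to a null set by weak charts amounts to covering each $U_k$ by weak charts $(V,\varphi)$ with $V\subset U_k$, since the union over $k$ of such families is then an atlas. Refining each $U_k$ further into countably many Borel pieces on which $x\mapsto\omega_x$ is dominated by a fixed modulus --- legitimate because $\omega$ is a Borel family of moduli of continuity --- and passing to its least concave majorant, I may assume I am given a Borel set $U$ with $\mm(U)>0$ and a single concave modulus $\omega_0$ with $\omega_0(0)=0$ such that $\lip(f|_U)(x)\le\omega_0\big(|Df|_p(x)\big)$ for every $f\in\LIP_{bs}(X)$ and $\mm$-a.e.\ $x\in U$.

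Next I would construct, on $U$, a countable family of weak charts by running the argument of \cite{BKO19}, which derives the existence of Lipschitz charts from the uniform bound \eqref{eq:BKO0}. The blow-up step essentially only uses that bound on Lipschitz functions with $|D\cdot|_p$ of a controlled size (coming from rescaled components of a candidate chart map), so $\omega_0$ may there be replaced by the corresponding constant, using concavity to pass between scales, and the construction carries over. The genuinely new point --- and the step I expect to be the main obstacle --- is the removal of the finite Hausdorff dimension hypothesis present in \cite[Theorem 1.4]{BKO19}, which there is used only to bound the dimensions of the charts produced. Here the bound $\lip(f|_U)\le\omega_0(|Df|_p)$ itself forces the pointwise analytic dimension of $\mm|_U$ --- the maximal number of independent Alberti representations of $\mm|_U$, in the sense of \cite{Bate15} --- to be finite at $\mm$-a.e.\ point of $U$; since this is a Borel function of $x$, one partitions $U$ into countably many Borel sets of locally bounded analytic dimension and runs the construction on each. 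Taking the union over all these pieces, and over all $k$, exhibits $X$ as a weak LDS admitting an atlas each of whose charts is contained in some $U_k$.

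It remains to invoke Theorem \ref{thm:isom}. If $(V,\varphi)$ is a chart of this atlas with $V\subset U_k$ and $f\in\LIP_{bs}(X)$, then $\lip(f|_V)\le\lip(f|_{U_k})$ pointwise on $V$, because the difference quotients defining $\lip(f|_V)$ are taken over the smaller set $V$; hence $\lip(f|_V)\le\omega_x(|Df|_p)$ $\mm$-a.e.\ on $V$, and condition (2) of Theorem \ref{thm:isom} holds for this atlas. Since, as remarked after that theorem, (1) and (2) may be verified on any single atlas, the equivalence (1)$\iff$(2) yields condition (1). For the self-improvement to $p'\ge p$, note that the atlas does not depend on the exponent, so Theorem \ref{thm:isom} applies at exponent $p'$ as well; and for $f\in\LIP_{bs}(X)$ one has $|Df|_{p'}\ge|Df|_p$ $\mm$-a.e., since $f$ has bounded support and $\mm$ is finite on bounded sets, so that (after localizing the relevant curve families to a bounded set) a $p'$-exceptional family is $p$-exceptional by Fuglede's lemma --- any $\rho\in L^{p'}$ witnessing it also lies in $L^p$ there --- and hence every $p'$-weak upper gradient of $f$ is a $p$-weak upper gradient. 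Therefore $\lip(f|_{U_k})\le\omega_x(|Df|_p)\le\omega_x(|Df|_{p'})$ $\mm$-a.e.\ on each $U_k$, so condition (2) of Theorem \ref{thm:isom} holds at exponent $p'$, and the equivalence at that exponent gives condition (1) at exponent $p'$. (By \cite{DiMarinoSpeight15} the map $p'\mapsto|Df|_{p'}$ need not be constant, so this collapse genuinely uses the hypothesis.)
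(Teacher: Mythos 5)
The overall plan — derive the weak differentiable structure from the BKO machinery, then read off condition (1) of Theorem \ref{thm:isom} — is in the right spirit, and the monotonicity argument $|Df|_p \le |Df|_{p'}$ for $p'\ge p$ on bounded-support functions is correct. However, there are two genuine gaps in the middle of the proposal.

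First, the removal of the finite Hausdorff dimension hypothesis. You assert that the bound $\lip(f|_U)\le\omega_0(|Df|_p)$ ``itself forces the pointwise analytic dimension of $\mm|_U$ to be finite,'' that this quantity is a Borel function of $x$, and that one may partition accordingly. None of this is substantiated, and I do not see how to justify it without first knowing $X$ carries some differentiable structure, which is what we are trying to prove. The paper's route is different and much more elementary: the hypothesis $\lip(g|_{U_k})\le\omega(|Dg|_p)$ applied to $g(x)=d(x,S)$, together with the locality property \eqref{eq:sobloc}, shows that every set $S$ porous in $U_k$ is $\mm$-null (Remark \ref{rmk:por} gives $\lip(g|_{U_k})>0$ on $S$, while locality forces $|Dg|_p=0$ $\mm$-a.e.\ on $S$). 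Then Theorem \ref{thm:porous:infdoub} (MMPZ) gives infinitesimal doubling of $\mm|_{U_k}$, and Lemma \ref{lemm:finitedimension} (Bate) decomposes $U_k$ into pieces of finite Hausdorff dimension. This is the mechanism that allows BKO to be applied without a global finite-dimension assumption.

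Second, ``running the argument of BKO on each piece'' hides the actual technical work. The version of BKO's theorem that is applicable here (Theorem 6.2 / Remark 6.4 in \cite{BKO19}) is formulated in terms of $*$-upper gradients of Lipschitz functions $g\colon U_k\to\R$, not in terms of the Newtonian gradient $|Df|_p$ on $X$. The paper's proof therefore has to show, for $g\in\LIP(U_k)$ and any bounded $*$-upper gradient $\rho$ of $g$, that $\lip(g)\le\omega(\rho)$ $\mm$-a.e.\ on $U_k$. This is done by taking a Lipschitz extension $f\in\LIP_{bs}(X)$ of $g$, glueing $\rho$ on $U_k$ with $\lip(f)$ on $U_k^c$ to produce a genuine upper gradient $\widetilde\rho$ of $f$ on $X$, deducing $|Df|_p\le\rho$ $\mm$-a.e.\ on $U_k$ by minimality, and then applying the hypothesis. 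This lifting step is what translates the Sobolev-side assumption into the hypothesis BKO's Theorem 6.2 needs, and it is entirely absent from your proposal. Without it, the appeal to BKO is not justified, and the subsequent application of Theorem \ref{thm:isom} has nothing to stand on. A smaller point: passing to a single concave modulus is unnecessary — the paper works directly with the family $\{\omega_x\}$ and does not need concavity.
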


We finish the introduction by briefly describing the starting point of this work: a proof of \cite[Theorem 5.1]{GP16} without the a priori reflexivity of the Sobolev space. A (weak) $k$-dimensional chart $(U,\varphi)$ of a metric measure space $X$ is {\bf rectifiable} if $\mm|_{U}\ll \mathcal H^{k}|_{U}$ and $\varphi\colon U\to \R^{k}$ is a bi-Lipschitz embedding. We say that $X$ admits a \textbf{strongly rectifiable decomposition} if it can be covered up to a null-set by $(1+\varepsilon)$-bi-Lipschitz rectifiable charts (of arbitrary dimension) for every $\varepsilon>0$.

\begin{theorem}\label{thm:GH}
	If a metric measure space $X=(X,d,\mm)$ admits a strongly rectifiable decomposition, then $\Gamma_2(TX)$ is isometrically isomorphic to $L^2(T_{GH}X)$ as normed modules.
\end{theorem}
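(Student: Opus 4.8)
The plan is to realize the isomorphism fibrewise, via the observation that a strongly rectifiable decomposition provides, for every $\varepsilon>0$, an atlas of rectifiable charts that are \emph{nearly isometric}, which forces the fibre norm of the concrete tangent bundle to be Euclidean. I would first compute the fibre norm along such an atlas (an essentially routine estimate), then upgrade the resulting ``$(1+\varepsilon)$-comparable to Euclidean'' to ``exactly Euclidean'' using the atlas independence of $\Gamma_2(TX)$ together with a compactness argument, and finally assemble the pointwise linear isometries into an $L^\infty$-linear isometric isomorphism onto $L^2(T_{GH}X)$. The main obstacle is the middle step: making precise how the atlas-independence isomorphism acts on fibres so that the limit $\varepsilon\to 0$ is legitimate, and running the compactness argument that promotes near-Euclidean to Euclidean; this is the counterpart, carried out directly on the concrete module, of the blow-up argument of \cite{GP16} showing that the Gromov--Hausdorff tangent is Euclidean.

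For the local computation, let $(U,\varphi)$ be a $(1+\varepsilon)$-bi-Lipschitz rectifiable $k$-chart, so $(1+\varepsilon)^{-1}d(x,y)\le|\varphi(x)-\varphi(y)|\le(1+\varepsilon)d(x,y)$ on $U$ and $\mm|_U\ll\mathcal H^k|_U$. Since $\varphi$ maps $U$ bi-Lipschitzly onto a subset of $\R^k$ of positive $\mathcal H^k$-measure, and bi-Lipschitz maps preserve Hausdorff-null sets, the Lebesgue density theorem yields that $\varphi(x)$ has Lebesgue density $1$ in $\varphi(U)$ for $\mm$-a.e.\ $x\in U$. Fixing such an $x$ and a linear $L\colon\R^k\to\R$, the upper bi-Lipschitz bound gives $\lip(L\circ\varphi|_U)(x)\le(1+\varepsilon)|L|$ (Euclidean operator norm); conversely, choosing a direction realizing $|L|$, the density-$1$ property produces points $y\in U$ arbitrarily close to $x$ with $\varphi(y)-\varphi(x)$ nearly in that direction, and the lower bi-Lipschitz bound then gives $\lip(L\circ\varphi|_U)(x)\ge(1+\varepsilon)^{-1}|L|$. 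Dualizing, $(1+\varepsilon)^{-1}|v|\le|v|_{\mathscr A,x}\le(1+\varepsilon)|v|$ for $\mm$-a.e.\ $x\in U$.

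Next I would upgrade this to an exact identification. Every rectifiable $k$-chart is, up to an $\mm$-null set, contained in the set $A_k$ on which $\mm$ is $k$-rectifiable, so every atlas of rectifiable charts refines the decomposition $\{A_k\}$; in particular, for each $\varepsilon>0$, strong rectifiability supplies an atlas $\mathscr A_\varepsilon$ of $(1+\varepsilon)$-bi-Lipschitz rectifiable charts subordinate to $\{A_k\}$. Fix a reference atlas $\mathscr A$ of rectifiable charts. The canonical isometric isomorphism $\Gamma_2^{\mathscr A_\varepsilon}(TX)\cong\Gamma_2^{\mathscr A}(TX)$ of Section \ref{sec:conc} is realized, over $\mm$-a.e.\ $x$, by an invertible linear map of fibres: namely the a.e.-defined derivative of the overlap map between the two charts through $x$, as forced by the chain rule for the differentials $\d_x(\cdot)$. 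Hence $|\cdot|_{\mathscr A,x}$ is the pushforward under an invertible linear map of $|\cdot|_{\mathscr A_\varepsilon,x}$, which by the previous step is within a factor $1+\varepsilon$ of an inner-product norm; thus so is $|\cdot|_{\mathscr A,x}$. Since this holds for every $\varepsilon>0$, a standard compactness argument in the finite-dimensional cone of inner products on $\R^k$ forces $|\cdot|_{\mathscr A,x}$ to be an inner-product norm for $\mm$-a.e.\ $x$.

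Finally, for $\mm$-a.e.\ $x\in A_k$, since $(\R^k,|\cdot|_{\mathscr A,x})$ is an inner-product space there is a linear isometry $\mathfrak u_x\colon(\R^k,|\cdot|_{\mathscr A,x})\to(\R^k,|\cdot|)$ onto the standard Euclidean $\R^k$ compatibly with the identification defining $T_{GH}X$; the map $x\mapsto\mathfrak u_x$ can be chosen $\mm$-measurably, as $x\mapsto|\cdot|_{\mathscr A,x}$ is measurable by construction and the inducing inner product, hence a measurable orthonormalization, depends measurably on it. Then $s\mapsto\big(x\mapsto\mathfrak u_x(s(x))\big)$ is an $L^\infty$-linear map $\Gamma_2(TX)\to L^2(T_{GH}X)$ that preserves pointwise norms, hence an isometric embedding, and is fibrewise bijective, hence an isometric isomorphism; the same argument gives $\Gamma_q(TX)\cong L^q(T_{GH}X)$ for every $1<q<\infty$.
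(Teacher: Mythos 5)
Your argument is correct and follows essentially the same route as the paper: show that the concrete fibre norm is Hilbertian by letting $\varepsilon\to 0$ over $(1+\varepsilon)$-bi-Lipschitz atlases and using atlas-independence of the fibre norm up to isometry (this is Lemma~\ref{lem:Hilb_md}), then build the isomorphism fibrewise via a measurable orthonormalization (the paper instead fixes a module-orthonormal frame $\{v^i_j\}$ on each chart domain, which is the same device). The only cosmetic difference is that you estimate the fibre norm directly from the bi-Lipschitz bound rather than routing through the metric derivative ${\rm md}_z(\varphi^{-1})$ as in Lemma~\ref{lemm:rectifiable:norm}.
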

As an immediate corollary of Theorems \ref{thm:isomA} and \ref{thm:GH} we obtain \cite[Theorem 5.1]{GP16}.
\begin{corollary}\label{cor:GH}
If $X=(X,d,\mm)$ admits a strongly rectifiable decomposition, then $L^2(TX)$ embeds isometrically in $L^2(T_{GH}X)$ as a normed module.
\end{corollary}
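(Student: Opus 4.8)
The plan is to obtain this statement as a formal composition of Theorems \ref{thm:isom} and \ref{thm:GH}, so the argument will be short. First I would observe that, by definition, a rectifiable chart is in particular a (weak) chart; hence a strongly rectifiable decomposition of $X$ (applied with, say, $\varepsilon = 1$) covers $X$ up to an $\mm$-null set by (weak) charts, and refining these to a Borel partition yields an atlas. Therefore $X$ is a weak Lipschitz differentiability space, and Theorem \ref{thm:isom} applies with the choice $p = q = 2$: it furnishes an isometric embedding of normed modules
\[
\iota\colon L^2(TX)\longrightarrow \Gamma_2(TX).
\]

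Next, since $X$ admits a strongly rectifiable decomposition by hypothesis, Theorem \ref{thm:GH} provides an isometric isomorphism of normed modules $\Phi\colon \Gamma_2(TX)\to L^2(T_{GH}X)$; here one uses that $\Gamma_2(TX)$ is well defined, i.e.\ independent of the chosen atlas up to isometric isomorphism, so that the same module $\Gamma_2(TX)$ occurs as the target of $\iota$ and as the source of $\Phi$. Composing, $\Phi\circ\iota\colon L^2(TX)\to L^2(T_{GH}X)$ is an isometric embedding of normed modules, since the composition of an isometric module embedding with an isometric module isomorphism is again one. This is precisely the assertion of the corollary, and it recovers \cite[Theorem 5.1]{GP16}; combined with Corollary \ref{cor:reflexive}, it also shows that the a priori reflexivity of $W^{1,2}(X)$ assumed in \cite{GP16} is in fact automatic, as conjectured there.

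I do not expect a genuine obstacle at this stage: all of the substance has been absorbed into the two theorems being invoked — the embedding $L^2(TX)\hookrightarrow\Gamma_q(TX)$ coming from (the adjoint of) the concrete differential, and the identification $\Gamma_2(TX)\cong L^2(T_{GH}X)$ built from the $(1+\varepsilon)$-bi-Lipschitz rectifiable charts. The only point deserving a line of care is the atlas-independence just mentioned, which ensures the two isometries can be legitimately composed; this is part of the statements recalled in the introduction and in Section \ref{sec:conc}.
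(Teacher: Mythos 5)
Your proposal is correct and is exactly the argument the paper has in mind when it calls the corollary "immediate" from Theorems \ref{thm:isom} and \ref{thm:GH}; in fact the paper already writes down the composition $\mathscr I = {\rm I}\circ\iota$ at the end of the proof of Theorem \ref{thm:GH}. The one small point you rightly flag — that a rectifiable decomposition makes $X$ a weak LDS so that Theorem \ref{thm:isom} applies with $p=q=2$ — is exactly the observation recorded just before Lemma \ref{lemm:rectifiable:norm}.
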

\section{Preliminaries}\label{sec:pre}
Throughout the paper, a \textbf{metric measure space} is a triple \((X,d,\mm)\), where \((X,d)\) is a complete separable metric space and \(\mm\geq 0\) is a Radon measure which is finite on bounded sets. We denote by $\Lip(f)$ the Lipschitz constant of a function $f\colon X\to\R$ and by $\LIP(X)$ (resp.\ $\LIP_{bs}(X)$) the space of Lipschitz functions (resp.\ Lipschitz functions with bounded support). The \textbf{pointwise Lipschitz constant} of $f\in \LIP(X)$ is given by
\[
\lip f(x)\coloneqq\varlimsup_{y\to x}\frac{\big|f(y)-f(x)\big|}
{d(y,x)},\quad\text{ for every accumulation point }x\in X,
\]
and \(\lip f(x)\coloneqq 0\) elsewhere.

\subsection{Sobolev spaces on metric measure spaces}\label{sec:newton}
Let $X=(X,d,\mm)$ be a metric measure space, and \(p\in[1,\infty]\). A family $\Gamma \subset \mathcal{C}( \left[0, 1\right], X )$ of paths is \textbf{\(p\)-negligible} if there exists a Borel function $g \in L^{p}( \mm )$ such that
\begin{equation*}
	\infty
	=
	\int_{ 0 }^{ 1 }
	g( \gamma_{t} )
	|\dot\gamma_{t}|
	\,\d t
\end{equation*}
for every absolutely continuous $\gamma \in \Gamma$.

\begin{definition}[Sobolev space]\label{def:sobolev}
A Borel function $g \colon X \rightarrow \left[0, \infty\right]$ is an \textbf{upper gradient} of a function $f \colon X \rightarrow \mathbb{R}$, if
\begin{equation}
    \label{eq:upper:gradient:ineq}
    \big| f( \gamma_{1} ) - f( \gamma_{0} ) \big|
    \leq
    \int_{ 0 }^{ 1 }
        g( \gamma_{t} )
        |\dot\gamma_{t}|
    \,\d t
\end{equation}
for every absolutely continuous $\gamma \colon \left[0, 1\right] \to X$. We say that $f \in N^{1,p}(X)$ if $f \in L^{p}(\mm)$ and there exists an upper gradient $g \in L^{p}(\mm)$ of $f$.
\end{definition}

If \eqref{eq:upper:gradient:ineq} holds for all paths outside a $p$-negligible path family, $g$ is said to be a \textbf{$p$-weak upper gradient} of $f$. Every Newton--Sobolev function $f\in N^{1,p}(X)$ admits a minimal $p$-weak upper gradient (see \cite{HKST15}), which we denote by $|Df|_p$ to emphasize its dependence on the exponent $p$. We refer the reader to \cite{HKST15} for further details.

An important feature of minimal \(p\)-weak upper gradients
is their \textbf{locality property}
(cf.\ \cite[Proposition 6.3.22]{HKST15}): if
\(f\in N^{1,p}(X)\) and \(E\subseteq X\) is a Borel set such that
\(f=0\) \(\mm\)-a.e.\ on \(E\), then 
\begin{equation}\label{eq:sobloc}
|Df|_p=0\quad\mm\text{-a.e.\ on }E.
\end{equation}

Here Newton--Sobolev functions are everywhere defined and $\|f\|_{1,p}\coloneqq(\|f\|_p^p+\||Df|_p\|_p^p)^{1/p}$ is only a seminorm on $N^{1,p}(X)$. The quotient, called the {\bf Newton--Sobolev space}, is a Banach space, but the equivalence classes of functions are determined by equality outside a set of zero $p$-capacity rather than a.e.\ equality, cf.\ \cite[Chapter 7]{HKST15}. The Newton--Sobolev space over a metric measure space agrees with several other notions, in particular the one obtained by relaxation of the $p$-integral of the pointwise Lipschitz constant.
\begin{theorem}[Density in energy of Lipschitz functions
\cite{AmbrosioGigliSavare11-3}]\label{thm:density_energy}
Let $1<p<\infty$. For every \(f\in N^{1,p}(X)\)
there exists a sequence \((f_n)_n\subseteq\LIP_{bs}(X)\) such
that \(f_n\to f\) and \(\lip(f_n)\to|Df|_p\) in \(L^p(\mm)\).
\end{theorem}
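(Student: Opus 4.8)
The plan is to deduce the statement from the known identification of the Newtonian Sobolev space with the Sobolev space $W^{1,p}(X)$ obtained by relaxation of the functional $f\mapsto\int_X(\lip f)^p\,\d\mm$ over $\LIP_{bs}(X)$: by \cite{AmbrosioGigliSavare11-3} one has $W^{1,p}(X)=N^{1,p}(X)$ and the corresponding minimal relaxed gradient equals $|Df|_p$ $\mm$-a.e. Granting this, the only thing left is to upgrade the weak $L^p(\mm)$-convergence of the approximating gradients (which the relaxation procedure produces automatically) to strong convergence, and this is where the uniform convexity of $L^p(\mm)$ for $1<p<\infty$ enters. So the proposal has two parts: quote the identification, then run a Radon--Riesz argument.

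Concretely, first I would fix $f\in N^{1,p}(X)=W^{1,p}(X)$ and, using the identification, choose a recovery sequence $(f_n)\subset\LIP_{bs}(X)$ with $f_n\to f$ in $L^p(\mm)$ and, after passing to a subsequence, $\|\lip f_n\|_{L^p(\mm)}\to\||Df|_p\|_{L^p(\mm)}$; by reflexivity of $L^p(\mm)$ I would extract a further subsequence along which $\lip f_n\rightharpoonup g$ weakly in $L^p(\mm)$ for some $g\in L^p(\mm)$. The next step is to identify $g$ with $|Df|_p$. On the one hand $g$ is a $p$-weak upper gradient of $f$: by Mazur's lemma suitable convex combinations $\sum_i\lambda_i^n\lip f_i$ converge strongly in $L^p(\mm)$ to $g$, they dominate $\lip(\sum_i\lambda_i^n f_i)$ by sublinearity of the pointwise Lipschitz constant, the functions $\sum_i\lambda_i^n f_i$ converge to $f$ in $L^p(\mm)$, and $p$-weak upper gradients are stable under such $L^p$-convergence; hence $g\ge|Df|_p$ $\mm$-a.e. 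On the other hand, by weak lower semicontinuity of the norm, $\|g\|_{L^p(\mm)}\le\liminf_n\|\lip f_n\|_{L^p(\mm)}=\||Df|_p\|_{L^p(\mm)}$. Since $g\ge|Df|_p\ge 0$ $\mm$-a.e.\ and the two have the same finite $L^p$-norm, necessarily $g=|Df|_p$ $\mm$-a.e.

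Finally I would invoke the Radon--Riesz property: we now have $\lip f_n\rightharpoonup|Df|_p$ weakly in $L^p(\mm)$ together with $\|\lip f_n\|_{L^p(\mm)}\to\||Df|_p\|_{L^p(\mm)}$, and since $L^p(\mm)$ is uniformly convex for $1<p<\infty$ this forces $\lip f_n\to|Df|_p$ strongly in $L^p(\mm)$. Combined with $f_n\to f$ in $L^p(\mm)$ and $f_n\in\LIP_{bs}(X)$, this is exactly the assertion. (A priori this gives the conclusion only along a subsequence, but since the full original statement concerns existence of some sequence, that is enough; alternatively one runs the argument on the full sequence using that every subsequence has a sub-subsequence with the desired property.)

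The single genuinely hard ingredient is the identification of $N^{1,p}(X)$ with the relaxed Sobolev space, together with the equality of the minimal gradients; I would not attempt to reprove it, but quote \cite{AmbrosioGigliSavare11-3}, whose argument passes through Lisini's superposition principle for curves. Everything downstream of that is soft: weak sequential compactness in the reflexive space $L^p(\mm)$, Mazur's lemma, the Fuglede-type stability of $p$-weak upper gradients under $L^p$-convergence, weak lower semicontinuity of the norm, and the elementary fact that weak convergence plus convergence of norms implies strong convergence in a uniformly convex Banach space.
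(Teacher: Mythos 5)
The paper states this result as a citation to \cite{AmbrosioGigliSavare11-3} and gives no proof, so there is no in-paper argument to compare against; what follows is an assessment of your derivation on its own terms.

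Your argument is correct, and in substance it reconstructs the standard final step of the Ambrosio--Gigli--Savar\'e development. Granting the identification of $N^{1,p}(X)$ with the relaxed (Cheeger) Sobolev space together with the $\mm$-a.e.\ equality of the minimal relaxed slope and $|Df|_p$, the passage from weak to strong $L^p$-convergence of $\lip f_n$ is exactly the chain you describe: pick a minimizing sequence with $\|\lip f_n\|_{L^p}\to\||Df|_p\|_{L^p}$ from the variational characterization of the Cheeger energy; extract a weak $L^p$-limit $g$; use Mazur's lemma, the sublinearity of $\lip$, and the Fuglede-type $L^p$-stability of $p$-weak upper gradients to get $g\ge|Df|_p$ $\mm$-a.e.; then weak lower semicontinuity of the norm forces $g=|Df|_p$; finally Radon--Riesz in the uniformly convex space $L^p(\mm)$ upgrades to strong convergence. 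Each step is sound, and your remark at the end that a subsequence suffices is the right disclaimer.

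Two things are worth flagging. First, the ``identification'' you invoke and the density-in-energy statement you are proving are not independent items in \cite{AmbrosioGigliSavare11-3}; the strong approximation is established there essentially alongside (and as part of) the proof that the relaxed and Newtonian gradients coincide, so your argument is best understood as an explicit unpacking of how the strong statement follows once the relaxation framework and the gradient equality are in place, rather than as an independent route that avoids the hard analysis (Lisini's superposition, test plans). Second, a small technical point in the Mazur step: the Fuglede-type stability lemma produces a $p$-weak upper gradient of a \emph{representative} of $f$ (a priori only determined up to $p$-capacity zero), so to conclude $g\ge|Df|_p$ $\mm$-a.e.\ one should note that $f\in N^{1,p}(X)$ is already fixed and that the $\mm$-a.e.\ class of the minimal $p$-weak upper gradient is well defined; this is standard but deserves a sentence.
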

\subsection{Abstract tangent and cotangent module}
We assume the reader to be familiar with the language of \textbf{normed modules} on a given metric measure space \((X,d,\mm)\); cf.\ \cite{Gigli14,Gigli17} for a detailed account on this theory.
\begin{theorem}[Cotangent module]
	Let \((X,d,\mm)\) be a metric measure space, and \(1<p<\infty\). Then there exist a \(L^p(\mm)\)-normed
	\(L^\infty(\mm)\)-module \(L^p(T^*X)\) (called the 
	\textbf{abstract cotangent module}) and a linear map
	\(\d\colon N^{1,p}(X)\to L^p(T^*X)\) (called the \textbf{differential})
	such that:
	\begin{itemize}
		\item[\(\rm i)\)] \(|\d f|=|Df|_p\) holds \(\mm\)-a.e.\ on
		\(X\) for every \(f\in N^{1,p}(X)\).
		\item[\(\rm ii)\)] The family of all elements of the form
		\(\sum_{i=1}^n\nchi_{A_i}\d f_i\), where \((A_i)_i\)
		is a Borel partition of \(X\) and \((f_i)_i\subseteq N^{1,p}(X)\),
		is dense in \(L^p(T^*X)\).
	\end{itemize}
	Moreover, the couple \(\big(L^p(T^*X),\d\big)\) is uniquely determined, up to unique isometric isomorphism.
\end{theorem}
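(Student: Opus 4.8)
The plan is to construct the couple $\big(L^p(T^*X),\d\big)$ through Gigli's \emph{pre-cotangent module}, the only substantial input being the locality property \eqref{eq:sobloc} of minimal $p$-weak upper gradients. First I would form the \textbf{pre-cotangent module} $\mathcal{PCM}$: its elements are equivalence classes of finite sequences $\{(A_i,f_i)\}$, where $(A_i)_i$ is a Borel partition of $X$ and $f_i\in N^{1,p}(X)$, two sequences being identified when they coincide after passing to a common refinement of the partitions and after replacing each $f_i$ by an $\mm$-a.e.\ representative agreeing with it on $A_i$. One equips $\mathcal{PCM}$ with the vector space operations performed componentwise on a common refinement and with the multiplication by simple functions $\sum_j\lambda_j\nchi_{B_j}$. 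The core of the argument is then to introduce the candidate \textbf{pointwise norm}
\[
\big|\{(A_i,f_i)\}\big|\coloneqq\sum_{i}\nchi_{A_i}\,|Df_i|_p\ \in\ L^p(\mm)
\]
and to verify that it is well defined on $\mathcal{PCM}$: refining a partition visibly leaves the right-hand side unchanged, while replacing $f_i$ by $g_i$ with $f_i=g_i$ $\mm$-a.e.\ on $A_i$ forces $|Df_i|_p=|Dg_i|_p$ $\mm$-a.e.\ on $A_i$ upon applying \eqref{eq:sobloc} to $f_i-g_i$. One also records the pointwise-$\mm$-a.e.\ relations $|v+w|\le|v|+|w|$ and $|h\,v|=|h|\,|v|$ for $h$ simple. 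I expect this verification to be the main obstacle -- not because it is deep, but because tracking the identifications and their compatibility with all the operations requires care, and everything rests on \eqref{eq:sobloc}.

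Next I would quotient $\mathcal{PCM}$ by the subspace $\{v:|v|=0\ \mm\text{-a.e.}\}$, equip the quotient with the norm $\|v\|\coloneqq\big\||v|\big\|_{L^p(\mm)}$, extend the simple-function multiplication to all of $L^\infty(\mm)$ (simple functions are dense in $L^\infty(\mm)$ and $\|h\,v\|\le\|h\|_{L^\infty(\mm)}\|v\|$), and finally take the metric completion, which I name $L^p(T^*X)$. A routine check shows that the pointwise norm and the $L^\infty(\mm)$-action extend continuously to the completion and keep satisfying $\big\||v|\big\|_{L^p(\mm)}=\|v\|$ and $|h\,v|=|h|\,|v|$, so that $L^p(T^*X)$ is an $L^p(\mm)$-normed $L^\infty(\mm)$-module. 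Then I set $\d f$ to be the image in $L^p(T^*X)$ of $\{(X,f)\}\in\mathcal{PCM}$; linearity of $\d$ is immediate from the definition of the operations on $\mathcal{PCM}$, property i) is immediate from the definition of the pointwise norm on such a generator (namely $\big|\{(X,f)\}\big|=\nchi_X|Df|_p=|Df|_p$), and property ii) holds because the image of $\mathcal{PCM}$ is dense in the completion by construction and each of its elements is exactly a finite combination $\sum_i\nchi_{A_i}\d f_i$ with $(A_i)_i$ a Borel partition.

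Finally, for uniqueness let $(\mathcal M,\d')$ be another pair satisfying i) and ii). I would define $\Phi$ on finite combinations by $\Phi\big(\sum_i\nchi_{A_i}\d f_i\big)\coloneqq\sum_i\nchi_{A_i}\d' f_i$. Using the general identity $|\nchi_A v|=\nchi_A|v|$ valid $\mm$-a.e.\ in any $L^\infty(\mm)$-normed module, together with property i) for $\mathcal M$, the pointwise norm of $\sum_i\nchi_{A_i}\d' f_i$ equals $\sum_i\nchi_{A_i}|Df_i|_p$, that is, the pointwise norm of $\sum_i\nchi_{A_i}\d f_i$; hence $\Phi$ is well defined and preserves pointwise norms, in particular is isometric. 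It is $L^\infty(\mm)$-linear and satisfies $\Phi\circ\d=\d'$; by ii) for $\mathcal M$ its image is dense, so $\Phi$ extends to an isometric isomorphism of normed modules, and this extension is the unique morphism with $\Phi\circ\d=\d'$ because it is prescribed on a dense subspace. This yields the asserted uniqueness up to unique isometric isomorphism.
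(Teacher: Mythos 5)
Your construction is correct and follows the same pre-cotangent-module route that the paper delegates to \cite[Section 2.2.1]{Gigli14} and \cite[Theorem 3.2]{GP19}, with the locality property \eqref{eq:sobloc} playing exactly the same well-definedness role. The only cosmetic difference is that Gigli declares $\{(A_i,f_i)\}$ and $\{(B_j,g_j)\}$ equal in the pre-module already when $|D(f_i-g_j)|_p=0$ $\mm$-a.e.\ on $A_i\cap B_j$, so the null space of the pointwise norm is absorbed from the outset, whereas you use the narrower identification $f_i=g_j$ $\mm$-a.e.\ on $A_i\cap B_j$ and then explicitly quotient by $\{v:|v|=0\ \mm\text{-a.e.}\}$; after completion the two produce the same module and differential.
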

This result was originally proved in \cite[Section 2.2.1]{Gigli14} for \(p=2\). An easy adaptation to the case of an arbitrary \(p\in(1,\infty)\) (and to more general classes of Sobolev spaces) can be found in \cite[Theorem 3.2]{GP19}.

\medskip
The \textbf{abstract tangent module} \(L^q(TX)\) is defined as the
module dual of \(L^p(T^*X)\). Observe that \(q\) is the conjugate
exponent of \(p\) and that \(L^q(TX)\) is a \(L^q(\mm)\)-normed
\(L^\infty(\mm)\)-module.

\subsection{Submetries and adjoints between normed modules}
Let \(\mathscr M,\mathscr N\) be \(L^p(\mm)\)-normed \(L^\infty(\mm)\)-modules,
for some exponent \(p\in(1,\infty)\). We refer to continuous $L^{\infty}(\mm)$-linear maps $\Phi \colon \mathscr M \to \mathscr N$ as \textbf{morphisms}. A morphism
\(\Phi\colon\mathscr M\to\mathscr N\) is said to be a
\textbf{submetry} provided for every element \(w\in\mathscr N\) there exists
\(v\in\mathscr M\) such that \(\Phi(v)=w\) and
\[
|v|=|w|=\underset{v'\in\Phi^{-1}(w)}{\rm ess\,inf}|v'|,
\quad\mm\text{-a.e.\ on }X.
\]
In particular, \(\Phi\) is surjective and satisfies
\(\big|\Phi(v)\big|\leq|v|\) in the \(\mm\)-a.e.\ sense for every
\(v\in\mathscr M\).
Note that an injective submetry is automatically an isometric isomorphism.
\begin{proposition}[Adjoint operator]\label{prop:adjoint_operator}
	Let \((X,d,\mm)\) be a metric measure space. Let \(\mathscr M\),
	\(\mathscr N\) be \(L^p(\mm)\)-normed \(L^\infty(\mm)\)-modules, for
	some \(p\in(1,\infty)\). Let \(\Phi\colon\mathscr M\to\mathscr N\)
	be a given morphism. Then there exists a unique morphism
	\(\Psi\colon\mathscr N^*\to\mathscr M^*\), called the \textbf{adjoint operator}
	of \(\Phi\), such that
	\begin{equation}\label{eq:def_adjoint}
	\big\langle\Psi(\eta),v\big\rangle=\big\langle\eta,\Phi(v)\big\rangle,
	\quad\text{ for every }v\in\mathscr M\text{ and }\eta\in\mathscr N^*.
	\end{equation}
	Moreover, if \(\Phi\) is a submetry, then \(\Psi\) is an isometric embedding.
\end{proposition}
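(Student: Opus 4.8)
The plan is to realize the adjoint by precomposition with $\Phi$. For $\eta\in\mathscr N^*$, viewed as a morphism $\eta\colon\mathscr N\to L^1(\mm)$, I set $\Psi(\eta)\coloneqq\eta\circ\Phi$. Since $\Phi$ and $\eta$ are $L^\infty(\mm)$-linear and continuous, so is $\Psi(\eta)\colon\mathscr M\to L^1(\mm)$, because $\|\eta(\Phi(v))\|_{L^1(\mm)}\le\|\eta\|_{\mathscr N^*}\,\|\Phi\|\,\|v\|_{\mathscr M}$; hence $\Psi(\eta)\in\mathscr M^*$, and \eqref{eq:def_adjoint} holds by construction. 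That $\Psi$ is itself a morphism is routine: $L^\infty(\mm)$-linearity is inherited from $(f\eta+\tilde\eta)\circ\Phi=f\,(\eta\circ\Phi)+\tilde\eta\circ\Phi$, and $\|\Psi(\eta)\|_{\mathscr M^*}\le\|\Phi\|\,\|\eta\|_{\mathscr N^*}$ gives continuity. Uniqueness follows since any morphism $\Psi'$ satisfying \eqref{eq:def_adjoint} obeys $\langle\Psi'(\eta),v\rangle=\langle\eta,\Phi(v)\rangle=\langle\Psi(\eta),v\rangle$ for all $v\in\mathscr M$ and all $\eta\in\mathscr N^*$, so $\Psi'(\eta)=\Psi(\eta)$ in $\mathscr M^*$ for every $\eta$, i.e.\ $\Psi'=\Psi$.

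Now assume $\Phi$ is a submetry, so $|\Phi(v)|\le|v|$ $\mm$-a.e.\ for every $v\in\mathscr M$. The easy inequality $|\Psi(\eta)|\le|\eta|$ $\mm$-a.e.\ comes from testing against unit elements: if $|v|\le1$ then $\langle\Psi(\eta),v\rangle=\langle\eta,\Phi(v)\rangle\le|\eta|\,|\Phi(v)|\le|\eta|$ $\mm$-a.e., and $|\Psi(\eta)|$ is the essential supremum of $\langle\Psi(\eta),v\rangle$ over such $v$. For the reverse inequality I would exploit the lifting property of a submetry. Fix a countable family $(w_n)_n\subseteq\mathscr N$ with $|w_n|\le1$ $\mm$-a.e.\ and $|\eta|={\rm ess\,sup}_n\,\langle\eta,w_n\rangle$ $\mm$-a.e.; for each $n$ choose, using that $\Phi$ is a submetry, an element $v_n\in\Phi^{-1}(w_n)$ with $|v_n|=|w_n|\le1$ $\mm$-a.e. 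Then $\langle\eta,w_n\rangle=\langle\eta,\Phi(v_n)\rangle=\langle\Psi(\eta),v_n\rangle\le|\Psi(\eta)|\,|v_n|\le|\Psi(\eta)|$ $\mm$-a.e., and taking the supremum over the (countably many) indices $n$ yields $|\eta|\le|\Psi(\eta)|$ $\mm$-a.e. Hence $|\Psi(\eta)|=|\eta|$ $\mm$-a.e., so $\Psi$ preserves the $L^q(\mm)$-norm and is in particular injective, i.e.\ an isometric embedding.

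The only point requiring care is the existence of the countable family $(w_n)_n$ realizing $|\eta|$ as a countable essential supremum of pairings against unit elements of $\mathscr N$; once this is in hand, the argument is purely formal. This is a standard feature of pointwise norms on dual modules: the essential supremum of $\{\langle\eta,w\rangle:|w|\le1\}$ is attained on a countable subfamily, since essential suprema of measurable functions always reduce to countable subfamilies and the relevant family is stable, up to $\mm$-a.e.\ gluing, under finite maxima. I would either invoke this from \cite{Gigli14,Gigli17} or record it as a short preliminary lemma. I do not expect any other obstacle.
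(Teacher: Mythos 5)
Your proposal is correct and follows essentially the same route as the paper: define $\Psi$ by precomposition with $\Phi$, verify it is the unique morphism satisfying \eqref{eq:def_adjoint}, prove $|\Psi(\eta)|\le|\eta|$ $\mm$-a.e.\ by testing against unit elements, and obtain the reverse pointwise inequality from the submetry's lifting property. The one place you are more explicit than the paper is the reduction of the essential supremum defining $|\eta|$ to a countable subfamily before lifting; the paper leaves this implicit in its chain of essential-supremum inequalities, so this is a technical refinement of the same argument rather than a different approach.
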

\begin{proof}
	Define \(\Psi\colon\mathscr N^*\to\mathscr M^*\) as in
	\eqref{eq:def_adjoint}. Being \(\Phi\) a morphism, there exists a constant
	\(C>0\) such that \(\big|\Phi(v)\big|\leq C|v|\) holds \(\mm\)-a.e.\ for
	every \(v\in\mathscr M\). Hence, for any \(\eta\in\mathscr N^*\) and
	\(v\in\mathscr M\) it holds
	\[
	\big|\big\langle\Psi(\eta),v\big\rangle\big|=
	\big|\big\langle\eta,\Phi(v)\big\rangle\big|\leq
	C|\eta||v|,\quad\mm\text{-a.e.\ on }X,
	\]
	which shows that \(\Psi\) is a well-defined morphism. Observe that
	it is uniquely determined by \eqref{eq:def_adjoint}.
	Now suppose \(\Phi\) is a submetry. Given any \(\eta\in\mathscr N^*\),
	we know from \eqref{eq:def_adjoint} that
	\begin{equation}\label{eq:prop_adjoint_aux}
	\big|\Psi(\eta)\big|=\underset{\substack{v\in\mathscr M: \\ |v|\leq 1
			\;\mm\text{-a.e.}}}{\rm ess\,sup}\big\langle\Psi(\eta),v\big\rangle=
	\underset{\substack{v\in\mathscr M: \\ |v|\leq 1
			\;\mm\text{-a.e.}}}{\rm ess\,sup}\big\langle\eta,\Phi(v)\big\rangle\leq
	|\eta|\underset{\substack{v\in\mathscr M: \\ |v|\leq 1
			\;\mm\text{-a.e.}}}{\rm ess\,sup}|v|\leq|\eta|,
	\quad\mm\text{-a.e.\ on }X,
	\end{equation}
	thus \(\big|\Psi(\eta)\big|\leq|\eta|\) holds \(\mm\)-a.e.\ on \(X\).
	To prove the converse inequality, note that
	given \(w\in\mathscr N\) with \(|w|\leq 1\) in the \(\mm\)-a.e.\ sense,
	there exists \(v\in\Phi^{-1}(w)\) such that \(|v|=|w|\leq 1\) holds
	\(\mm\)-a.e., so
	\[
	|\eta|=\underset{\substack{w\in\mathscr N: \\ |w|\leq 1\;\mm\text{-a.e.}}}
	{\rm ess\,sup}\langle\eta,w\rangle\leq
	\underset{\substack{v\in\mathscr M: \\ |v|\leq 1\;\mm\text{-a.e.}}}
	{\rm ess\,sup}\big\langle\eta,\Phi(v)\big\rangle\leq
	\underset{\substack{v\in\mathscr M: \\ |v|\leq 1\;\mm\text{-a.e.}}}
	{\rm ess\,sup}\big\langle\Psi(\eta),v\big\rangle\leq\big|\Psi(\eta)\big|,
	\quad\mm\text{-a.e.\ on }X.
	\]
	All in all, we have proven that \(\big|\Psi(\eta)\big|=|\eta|\) holds
	\(\mm\)-a.e.\ on \(X\), whence \(\Psi\) is an isometry.
\end{proof}
\subsection{Measure theory}\label{subsec:porousity}
Let $X = (X, d, \mm)$ be a metric measure space. Let $S \subset U \subset X$ be sets. We say that $S$ is \textbf{porous} in $U$ if for every $x \in S$, there exists a sequence $( x_{n} )_{n = 1}^{\infty} \subset U$ converging to $x$ and $\eta > 0$ such that $d( x_{n}, S ) > \eta d( x_{n}, x )$ for every $n \in \mathbb{N}$. We say that $S \subset X$ is \textbf{porous} if $S$ is porous in $X$.
\begin{remark}\label{rmk:por}{\rm
	Note that if $S$ is porous and $f(x)\coloneqq d(x,S)$,
	then $\lip f(x)>0$ for every $x\in S$.\fr}
\end{remark}

We say that $\mm$ is \textbf{infinitesimally doubling} if $\limsup_{r\to 0}\frac{\mm(B(x,2r))}{\mm(B(x,r))}<\infty$ $\mm$-a.e.\ $x\in X$.
\begin{theorem}[{\cite[Theorem 3.6]{MMPZ03}}]\label{thm:porous:infdoub}
If porous sets have zero $\mm$-measure, then $\mm$ is infinitesimally doubling.
\end{theorem}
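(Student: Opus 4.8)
The plan is to argue by contraposition: assuming that $\mm$ is \emph{not} infinitesimally doubling, I will construct a porous set of positive $\mm$-measure, which contradicts the hypothesis. Accordingly, I would set
\[
A\coloneqq\Big\{x\in X:\varlimsup_{r\to 0^{+}}\frac{\mm(B(x,2r))}{\mm(B(x,r))}=+\infty\Big\}
\]
and suppose $\mm(A)>0$. Since $A$ is $\mm$-measurable and $\mm$ is Radon, inner regularity lets me fix a \emph{compact} (hence bounded) set $K\subseteq A$ with $0<\mm(K)<\infty$; then $K^{1}\coloneqq\{y\in X:d(y,K)\le 1\}$ is bounded, so $\mm(K^{1})<\infty$. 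For each integer $j\ge 1$ and each $x\in K$, the definition of $A$ provides a radius $r_{j}(x)\in(0,2^{-j})$ with
\[
\mm\big(B(x,r_{j}(x))\big)\le 4^{-j}\,\mm\big(B(x,2r_{j}(x))\big).
\]

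Fixing $j\ge 1$, I would then apply the basic $5r$-covering lemma (valid in any metric space, since the radii $2r_{j}(x)<2^{1-j}\le 1$ are uniformly bounded) to the family $\{\overline B(x,2r_{j}(x)):x\in K\}$, obtaining a countable pairwise disjoint subfamily $\{\overline B(x_{k},2r_{j}(x_{k}))\}_{k}$ with $K\subseteq\bigcup_{k}\overline B(x_{k},10r_{j}(x_{k}))$. Writing $C_{j}\coloneqq\bigcup_{k}B(x_{k},r_{j}(x_{k}))$ for the union of the concentric ``cores'', the disjointness of the doubled balls (all of which lie in $K^{1}$) together with the inequality above at each $x_{k}\in K$ yields
\[
\mm(C_{j})\le\sum_{k}\mm\big(B(x_{k},r_{j}(x_{k}))\big)\le 4^{-j}\sum_{k}\mm\big(B(x_{k},2r_{j}(x_{k}))\big)\le 4^{-j}\,\mm(K^{1}).
\]
Since $\sum_{j}4^{-j}<\infty$, I can pick $J$ with $\sum_{j\ge J}4^{-j}\mm(K^{1})<\mm(K)$ and set $K'\coloneqq K\setminus\bigcup_{j\ge J}C_{j}$, so that $\mm(K')\ge\mm(K)-\sum_{j\ge J}\mm(C_{j})>0$.

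It then remains to verify that $K'$ is porous, which I expect to follow immediately with the constant $\eta=\tfrac1{20}$: given $x\in K'$, for every $j\ge J$ the covering property gives an index $k(j)$ such that $z_{j}\coloneqq x_{k(j)}$ satisfies $d(x,z_{j})\le 10r_{j}(z_{j})<10\cdot 2^{-j}$, hence $z_{j}\to x$; the core $B(z_{j},r_{j}(z_{j}))$ lies in $C_{j}$ and is therefore disjoint from $K'$; and $z_{j}\ne x$, since $z_{j}$ lies in its own core while $x\notin C_{j}$, so that $d(z_{j},x)>0$. Consequently $d(z_{j},K')\ge r_{j}(z_{j})\ge\tfrac1{10}d(z_{j},x)>\tfrac1{20}d(z_{j},x)$ for all $j\ge J$, so the sequence $(z_{j})_{j\ge J}$ witnesses that $K'$ is porous at $x$. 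Since $\mm(K')>0$, this contradicts the assumption that porous sets are $\mm$-null.

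The step I expect to require the most care is the measure estimate for $\mm(C_{j})$, since this is the only place where the failure of infinitesimal doubling is genuinely used: it relies on combining the pointwise ``$4^{-j}$-concentration'' with the disjointness of the doubled balls produced by the covering lemma, and on the summability $\sum_{j}4^{-j}<\infty$, which is precisely what ensures that deleting the cores at all levels $\ge J$ does not exhaust the measure of $K$. The remaining ingredients — the reduction to a compact set of finite positive measure (including the routine measurability of $A$), the application of the $5r$-covering lemma, and checking the definition of porosity — are standard.
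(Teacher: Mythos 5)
The paper does not prove this statement; it simply cites \cite[Theorem 3.6]{MMPZ03}, so there is no internal proof to compare against. Your contrapositive argument is correct and self-contained: the $5r$-covering at each scale $j$ produces disjoint doubled balls inside the bounded set $K^{1}$, so the union of cores has measure at most $4^{-j}\mm(K^{1})$, and the summability in $j$ lets you delete all cores from scale $J$ on while retaining positive measure in $K'$; porosity of $K'$ then follows because each $x\in K'\subseteq K$ is, at every scale $j\ge J$, within $10r_{j}(z_{j})$ of a center $z_{j}\ne x$ whose core ball $B(z_{j},r_{j}(z_{j}))$ misses $K'$, giving $d(z_{j},K')\ge r_{j}(z_{j})\ge\tfrac1{10}d(z_{j},x)>\tfrac1{20}d(z_{j},x)$. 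This Vitali-covering/core-deletion scheme is the standard route to the result and agrees with the approach of the cited source; the only points that deserve an explicit word in a write-up are the ones you already flag as routine (Borel measurability of $A$, via left-continuity of $r\mapsto\mm(B(x,r))$, and countability of the disjoint subfamily, which follows from separability of $X$).
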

We recall that if $\mm$ is infinitesimally doubling, then $X$ is a Vitali space. In particular, the Lebesgue differentiation theorem holds \cite[Section 3.4]{HKST15}.
\begin{lemma}[{\cite[Lemma 8.3]{Bate15}}]\label{lemm:finitedimension}
If $\mm$ is infinitesimally doubling, then $X$ can be covered up to a null set by sets of finite Hausdorff dimension.    
\end{lemma}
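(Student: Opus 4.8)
The plan is to reduce the statement to a pointwise lower density bound for $\mm$ obtained from a countable Borel decomposition. For $N,j\in\N$ set
\[
E_{N,j}:=\bigl\{x\in X:\ \mm(B(x,2r))\le N\,\mm(B(x,r))\ \text{ for all }0<r<\tfrac1j\bigr\}.
\]
First I would note that each $E_{N,j}$ is Borel: the map $x\mapsto\mm(B(x,r))$ is lower semicontinuous (if $x_n\to x$ then $\nchi_{B(x,r)}\le\liminf_n\nchi_{B(x_n,r)}$, so Fatou applies), and by the monotonicity of $r\mapsto\mm(B(x,r))$ one may restrict the defining condition to rational radii, so that $E_{N,j}$ is a countable intersection of Borel sets (the harmless interchange of open and closed balls this forces is handled by a standard semicontinuity argument). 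Next, $\bigcup_{N,j}E_{N,j}$ has full $\mm$-measure: at $\mm$-a.e.\ $x$ one has $\mm(B(x,r))>0$ for every $r>0$ -- indeed the union of all open balls of zero measure is, by separability, a countable union of $\mm$-null balls, hence $\mm$-null -- and the infinitesimal doubling hypothesis then provides $N_0\in\N$ and $\varepsilon>0$ with $\mm(B(x,2r))\le N_0\,\mm(B(x,r))$ for all $r<\varepsilon$, so that $x\in E_{N_0,j}$ once $j>1/\varepsilon$.

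I would then refine the decomposition to make the constants uniform. Fix $x_0\in X$ and, for $i,\ell\in\N$, put
\[
F_{N,j,i,\ell}:=E_{N,j}\cap B(x_0,\ell)\cap\bigl\{x\in X:\ \mm(B(x,\tfrac1{2j}))\ge\tfrac1i\bigr\}.
\]
Each such set is Borel with $\mm(F_{N,j,i,\ell})<\infty$ (as $\mm$ is finite on bounded sets), and since $\{x:\mm(B(x,\tfrac1{2j}))=0\}$ is $\mm$-null the countable family $\{F_{N,j,i,\ell}\}$ still covers $X$ up to an $\mm$-null set. Now fix one piece $F=F_{N,j,i,\ell}$ and set $s:=\log_2 N$. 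Given $x\in F$ and $0<\sigma<\tfrac1{2j}$, let $k\in\N$ be least with $2^k\sigma\ge\tfrac1{2j}$; then $2^{k-1}\sigma<\tfrac1{2j}<\tfrac1j$, so iterating the doubling inequality defining $E_{N,j}$ along the radii $\sigma,2\sigma,\dots,2^{k-1}\sigma$ yields
\[
\tfrac1i\le\mm\bigl(B(x,\tfrac1{2j})\bigr)\le\mm\bigl(B(x,2^k\sigma)\bigr)\le N^k\,\mm\bigl(B(x,\sigma)\bigr).
\]
Since $2^k<\tfrac1{j\sigma}$ by minimality of $k$, one has $N^{-k}=(2^k)^{-s}>(j\sigma)^{s}$, and therefore $\mm(B(x,\sigma))\ge c_0\,\sigma^{s}$ for every $0<\sigma<\tfrac1{2j}$, with $c_0:=j^{s}/i>0$ depending only on the indices.

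Finally I would invoke a standard density comparison. Every $x\in F$ satisfies $\liminf_{\sigma\to0}\sigma^{-s}\mm(B(x,\sigma))\ge c_0>0$, and the usual estimate relating $\mathcal H^s$ to a Radon measure of positive lower $s$-density (obtained from the basic $5r$-covering lemma, valid in any metric space; see also the Vitali property recorded above) gives $\mathcal H^s(F)\le C\,c_0^{-1}\mm(F)<\infty$, hence $\dim_{\mathrm H}F\le s=\log_2 N<\infty$. Since $X$ is the union of the countably many Borel sets $F_{N,j,i,\ell}$ together with an $\mm$-null set, and each has finite Hausdorff dimension, the lemma follows. The only genuine work is the bookkeeping that converts the pointwise hypothesis into the uniform constants $N$, $1/j$, $1/i$ above, together with the routine passage from a positive pointwise lower $s$-density to finiteness of $\mathcal H^s$; the geometric heart -- that dyadic iteration of a doubling bound forces a polynomial-in-$\sigma$ lower bound on $\mm(B(x,\sigma))$ -- is the one-line computation displayed above.
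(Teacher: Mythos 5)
Your proof is correct and follows essentially the same route as Bate's Lemma~8.3, which the paper cites without reproducing: decompose $X$ (up to a null set) into countably many Borel pieces on which the doubling ratio, the scale, and a lower mass bound are all uniform, iterate the doubling estimate dyadically to obtain a polynomial lower bound $\mm(B(x,\sigma))\geq c_0\sigma^{s}$, and then invoke the standard density comparison via a $5r$-covering to conclude $\mathcal H^{s}(F)<\infty$. The only cosmetic difference is that you track $2r$-doubling with constant $N$ (giving $\dim_{\mathrm H}\leq\log_2 N$) whereas Bate's decomposition $D_\eta$ is set up so that the bound reads $\log_2\eta/5$, as the paper's remark after the lemma notes.
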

Note that the sets $Y\in D_\eta$ in \cite[Lemma 8.3]{Bate15} have $\dim_HY\le \log_2\eta/5$.
\section{Concrete tangent module and existence of an isometric embedding}

\subsection{Concrete tangent and cotangent module}\label{sec:conc}

Throughout this section, we fix a weak LDS $X=(X,d,\mm)$. An {\bf atlas } on $X$ is a countable family of charts whose domains partition $X$ up to a null-set. Given an atlas $\mathscr A=\{ (U_k,\varphi_k) \}_{k\in\N}$ on $X$, consider the measurable tangent and cotangent bundles
\[
T_{\mathscr A}^*X=\bigsqcup_{k\in\N}U_k\times(\R^{n_k})^*,\quad T_{\mathscr A}X=\bigsqcup_{k\in\N}U_k\times \R^{n_k}
\]
where, for $\mm$-a.e.\ $x\in U_k$,
\[
|L|_{\mathscr A,x}^*\coloneqq\lip(L\circ \varphi_k)(x),\quad |\xi|_{\mathscr A,x}\coloneqq\max\{ L(\xi):\ |L|_{\mathscr A,x}^*\le 1 \}
\]
for every $L\in (\R^{n_k})^*$ and $\xi\in \R^{n_k}$. Equip $T_{\mathscr A}X$ with the $\sigma$-algebra to which $S\subset T_{\mathscr A}X$ belongs if and only if $S\cap (U_k\times \R^{n_k})$ is a Borel set for every $k$, and denote $\pi\colon T_{\mathscr A}X\to X$, $(x,v)\mapsto x$ the bundle projection. A measurable section of $T_{\mathscr A}X$ is a measurable map $v\colon X\to T_{\mathscr A}X$ with $\pi\circ v(x)=x$ for all $x\in \bigcup_kU_k$. We define the analogous notions on $T_{\mathscr A}^*X$ in the obvious manner.

Let $\Gamma(T_{\mathscr A}X)$ and $\Gamma(T_{\mathscr A}^*X)$ denote the spaces of measurable sections of $T_{\mathscr A}X$ and $T_{\mathscr A}^*X$ respectively, considered up to \(\mm\)-a.e.\ equality. For $1<p<\infty$, define 
\begin{align*}
\Gamma_p(T_{\mathscr A}X)=\{v\in \Gamma(T_{\mathscr A}X) :\ \||v|_{\mathscr A}\|_p<\infty\},\quad\Gamma_p(T_{\mathscr A}^*X)=\{\omega\in \Gamma(T_{\mathscr A}^*X) :\ \||\omega|_{\mathscr A}^*\|_p<\infty\}.
\end{align*}
Note that $\Gamma_p(T_{\mathscr A}X)$ and $\Gamma_p(T_{\mathscr A}^*X)$ are $L^p(\mm)$-normed $L^\infty(\mm)$-modules over $(X,d,\mm)$.
\begin{remark}\label{rmk:simpledense}{\rm
The spaces $\Gamma_p(T_{\mathscr A}X)$ and $\Gamma_p(T_{\mathscr A}^*X)$ are generated by ``simple'' sections. That is, the subspace of sections taking only finitely many values is dense in each space.
\fr}\end{remark}

Let $\mathscr A'$ be another atlas. If $(U,\varphi)\in\mathscr A$, $(V,\psi)\in\mathscr A'$ and $\mm(U\cap V)>0$, then $\varphi$ and $\psi$ restrict to charts on $U\cap V$, and thus have the same dimension $n$. Moreover we have that
\begin{align}\label{eq:restrchart}
\lip (g|_{U\cap V})=\lip (g|_U)=\lip (g|_V)\quad \mm\textrm{-a.e.\ on }U\cap V
\end{align}
for every $g\in \LIP(X)$, see \cite[Corollary 2.7]{BS13}. Differentiating the components of one chart with respect to the other chart we obtain, for $\mm$-a.e.\ $x\in U\cap V$, a linear isomorphism $D_x\colon\R^n\to \R^n$ satisfying
\begin{align}\label{eq:ldccomp}
|L|_{\mathscr A',x}^*=|L\circ D_x|_{\mathscr A,x}^*,\quad|\xi|_{\mathscr A,x}=|D_x(\xi)|_{\mathscr A',x},\quad L\in (\R^n)^*,\ \xi\in \R^n.
\end{align}
Thus we have a measurable map $D\colon T_{\mathscr A}X\to T_{\mathscr A'}X$ which is an isometry on a.e.\ fiber, and induces isometric isomorphisms $D\colon\Gamma_p(T_{\mathscr A}X)\to \Gamma_p(T_{\mathscr A'}X)$, $D^*\colon\Gamma_p(T_{\mathscr A'}^*X)\to \Gamma_p(T_{\mathscr A}^*X)$. This gives rise to the  {\bf concrete tangent and cotangent modules}, denoted by $\Gamma_p(TX)$ and $\Gamma_p(T^*X)$, which are unique up to isometric isomorphism.
\begin{definition}\label{def:concretediff}
	For each $f\in \LIP(X)$, the measurable section $$\underline\d f\colon X\to T_{\mathscr A}^*X,\quad x\mapsto(x,\d_xf)$$ given by \eqref{eq:lds}, is called the {\bf concrete differential} of $f$, and the map $\underline\d\colon\LIP_{bs}(X)\to \Gamma_p(T^*X)$ is called the concrete differential.
\end{definition}
Note that for two atlases $\mathscr A,\mathscr A'$ the concrete differentials $\underline\d f,\underline\d'f$ satisfy $\underline\d f\circ D=\underline\d'f$, where $D\colon T_{\mathscr A}X\to T_{\mathscr A'}X$ is the isometric isomorphism constructed above. In particular the concrete differential is well-defined. Moreover it does not depend on $p$.

\begin{remark}\label{rmk:concdual}{\rm
It is not difficult to see that the module duals of the concrete tangent and
cotangent modules satisfy $$\Gamma_p(T^*X)^*=\Gamma_q(TX),\quad \Gamma_q(TX)^*=\Gamma_p(T^*X)$$ if $1/p+1/q=1$. In particular, both spaces are reflexive.
\fr}\end{remark}

\subsection{Submetry of cotangent modules}\label{sec:submetry}

In this section we construct an isometric embedding from the abstract tangent module into the concrete tangent module, and establish Theorem \ref{thm:isomA}. The embedding arises as the adjoint map of the $L^\infty(\mm)$-linear extension of the concrete differential. The next theorem states that this extension defines a submetry.

\begin{theorem}\label{thm:submetry}
Let $X=(X,d,\mm)$ be a weak LDS, and $1<p<\infty$. Then there exists a submetry $P\colon\Gamma_p(T^*X)\to L^p(T^*X)$ of normed modules, such that
\begin{equation}\label{eq:diffext}
P(\underline\d f)=\d f\quad \mm\text{-a.e.}
\end{equation}
for all $f\in \LIP_{bs}(X)$. Consequently the adjoint map $\iota\coloneqq P^*\colon L^q(TX)\to \Gamma_q(TX)$, where $1/p+1/q=1$, is an isometric embedding of normed modules.
\end{theorem}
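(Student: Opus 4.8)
The plan is to construct $P$ on a fixed atlas $\mathscr A = \{(U_k,\varphi_k)\}$ and then check atlas-independence at the end. The guiding principle is that $\Gamma_p(T^*X)$ is generated, as an $L^\infty(\mm)$-module, by the concrete differentials $\underline\d f$ of Lipschitz functions (this should be extracted from the differentiability property \eqref{eq:lds}: on each chart $U_k$, the components $\underline\d(\varphi_k^1),\dots,\underline\d(\varphi_k^{n_k})$ form a pointwise basis of the fiber, since $L \mapsto L\circ\varphi_k$ recovers any $L \in (\R^{n_k})^*$, so already finitely many concrete differentials locally generate). First I would define $P$ on the dense subalgebra-generated submodule spanned by elements of the form $\sum_i \nchi_{A_i}\,\underline\d f_i$ by the obvious formula $P(\sum_i \nchi_{A_i}\,\underline\d f_i) = \sum_i \nchi_{A_i}\,\d f_i \in L^p(T^*X)$, using property (ii) of the cotangent module for the target. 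The crucial analytic input is the pointwise inequality relating the two norms: for every $f \in \LIP_{bs}(X)$,
\begin{equation*}
|\d f| = |Df|_p \le \lip(f|_{U_k}) = |\underline\d f|_{\mathscr A}^* \quad \mm\text{-a.e. on } U_k,
\end{equation*}
where the middle inequality is the (always valid) bound $|Df|_p \le \lip f$ combined with the chart locality \eqref{eq:restrchart}, and the last equality is Definition \ref{def:concretediff} together with the norm on $T^*_{\mathscr A}X$. This inequality, applied to finite combinations (after a standard partition refinement argument to make the $A_i$'s disjoint and handle $\nchi_{A_i}\d f_i$), shows that $P$ is $1$-Lipschitz on the dense submodule, hence extends uniquely to a norm-nonincreasing morphism on all of $\Gamma_p(T^*X)$; $L^\infty(\mm)$-linearity passes to the limit, and \eqref{eq:diffext} holds by construction.

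The harder half is the submetry property: given $\omega \in L^p(T^*X)$, I must produce $\eta \in \Gamma_p(T^*X)$ with $P(\eta) = \omega$ and $|\eta|_{\mathscr A}^* = |\omega|$ $\mm$-a.e. The strategy is first to do this for $\omega = \d f$ with $f \in \LIP_{bs}(X)$, then for simple elements $\sum_i \nchi_{A_i}\d f_i$, then pass to the general case by density and a diagonal/gluing argument. For $\omega = \d f$: on chart $U_k$, by differentiability \eqref{eq:lds} we have the linear map $\d_x f \colon \R^{n_k} \to \R$ with $\lip(f|_{U_k} - \d_x f\circ\varphi_k|_{U_k})(x) = 0$ for $\mm$-a.e.\ $x \in U_k$; set $\eta(x) \coloneqq \underline\d f(x) = \d_x f$. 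Then $P(\eta) = \d f$ by \eqref{eq:diffext}, and $|\eta|_{\mathscr A}^*(x) = \lip(f|_{U_k})(x)$, which need not equal $|Df|_p(x) = |\d f|(x)$ — indeed this gap is exactly the obstruction that conditions (1)–(2) of Theorem \ref{thm:isom} later address. So for the pure submetry statement I instead need the essential-infimum characterization: I must show $|\d f|(x) = \operatorname*{ess\,inf}\{\,|\eta'|_{\mathscr A}^*(x) : \eta' \in \Gamma_p(T^*X),\ P(\eta') = \d f\,\}$ and that the infimum is attained. The "$\le$" direction is immediate since $P$ is norm-nonincreasing; the "$\ge$" direction and attainment are where the real work lies.

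For the "$\ge$" direction I would argue chart by chart and fiberwise. Fix $k$; on $U_k$ we may choose $\varphi_k^1, \dots, \varphi_k^{n_k}$ as coordinate functions and use $\underline\d\varphi_k^j$ as a measurable frame for $T^*_{\mathscr A}X|_{U_k}$, with dual frame $e_1,\dots,e_{n_k}$ for $T_{\mathscr A}X|_{U_k}$. An element $\eta'$ with $P(\eta') = \d f$ differs from $\underline\d f$ by an element of $\ker P$; the key is to identify $\ker P$ over $U_k$. Writing $\eta' = \sum_j g_j\,\underline\d\varphi_k^j$ with $g_j \in L^p(\mm|_{U_k})$, we have $P(\eta') = \sum_j g_j\,\d\varphi_k^j$, so $\eta' \in \ker P$ iff $\sum_j g_j\,\d\varphi_k^j = 0$ in $L^p(T^*X)$; the minimal-norm representative is then obtained by projecting $\underline\d f(x)$, inside $(\R^{n_k})^*$ equipped with the norm $|\cdot|^*_{\mathscr A,x}$, onto the affine subspace $\{\,L : P\text{-relation holds}\,\}$, and a measurable-selection argument (the fibers vary measurably, the norm is continuous, the constraint set is an affine subspace depending measurably on $x$) yields a measurable minimizer $\eta$ with $|\eta|^*_{\mathscr A}(x)$ equal to the pointwise distance, which one identifies with $|\d f|(x)$ by testing against vectors in $L^q(TX)$ — here I would use that $\langle \d\varphi_k^j, \cdot\rangle$ pairs with abstract tangent vectors and that $|\d f| = |Df|_p$ is itself the pointwise norm computed against such pairings, so the abstract norm is precisely the quotient norm $\inf |\eta'|^*_{\mathscr A}$. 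Once attainment and minimality are established for $\omega = \d f$, extend to $\omega = \sum_i \nchi_{A_i}\d f_i$ by gluing the chart-wise minimizers (using that essential infimum and the module norm are local), and to general $\omega \in L^p(T^*X)$ by approximating in $L^p$ and extracting a limit of the corresponding minimal representatives, whose norms converge in $L^p$ and which therefore converge to an element $\eta$ with $P(\eta) = \omega$, $|\eta|^*_{\mathscr A} = |\omega|$. I expect this measurable-selection-plus-gluing step — proving that the pointwise quotient norm is attained by a genuine section and coincides $\mm$-a.e.\ with $|\omega|$ — to be the main obstacle; the final sentence of the theorem (that $\iota = P^*$ is an isometric embedding) is then immediate from Proposition \ref{prop:adjoint_operator}, and the identifications $P(\underline\d f) = \d f$, atlas-independence, and $L^\infty$-linearity are routine bookkeeping.
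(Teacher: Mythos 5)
Your first half — constructing $P$ on the simple elements $\sum_i\nchi_{A_i}\underline\d f_i$, verifying $|P(\underline\omega)|\le|\underline\omega|$ via $|Df|_p\le\lip(f|_{U_k})=|\underline\d f|^*_{\mathscr A}$, and extending by density — matches the paper's proof essentially verbatim, and that part is fine (the paper's Lemma~\ref{lem:concrete_diff_wug} supplies the precise statement; you need to be a little careful that $|Df|_p\le\lip f$ requires the complement of the atlas to be $\mm$-null, but this is handled there).

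For the submetry half, however, your proposed route has a genuine gap, and it is worth seeing why the paper takes a different path. You want to argue fiberwise: write any $\eta'\in P^{-1}(\d f)$ in the chart frame $\underline\d\varphi_k^1,\dots,\underline\d\varphi_k^{n_k}$, describe $\ker P$ over $U_k$ as a measurably varying affine constraint, project $\underline\d f(x)$ onto it in the norm $|\cdot|^*_{\mathscr A,x}$, and then claim that the resulting pointwise distance equals $|\d f|(x)=|Df|_p(x)$. The last step is precisely the submetry statement, and the justification you offer (``testing against vectors in $L^q(TX)$, the abstract norm is precisely the quotient norm'') is circular: from $|P(\eta')|\le|\eta'|$ one only gets the one-sided inequality $|\d f|\le\operatorname*{ess\,inf}|\eta'|$, and the reverse inequality is what must be established. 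Moreover, describing the fiberwise constraint set presupposes knowing which combinations $\sum_j g_j\,\d\varphi_k^j$ vanish in $L^p(T^*X)$ over $U_k$, but $L^p(T^*X)$ is an abstract object whose fibers are not directly accessible — understanding them pointwise is essentially equivalent to the structure theorem you are trying to prove. A measurable-selection argument cannot start until that constraint set is concretely identified, so the sketch does not close.

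The paper sidesteps all of this by a global compactness argument rather than a fiberwise one: for $f\in N^{1,p}(X)$ take, via the density-in-energy theorem (Theorem~\ref{thm:density_energy}), Lipschitz $f_n\to f$ with $\lip(f_n)\to|Df|_p$ in $L^p$; the bounded sequence $\underline\d f_n$ in the reflexive module $\Gamma_p(T^*X)$ has a weak limit $\underline\omega$; continuity of $P$ plus closedness of $\d$ (cf.\ \cite[Theorem 2.2.9]{Gigli14}) give $P(\underline\omega)=\d f$; and weak lower semicontinuity of the $L^p$-norm combined with $|\underline\omega|\ge|P(\underline\omega)|=|Df|_p$ forces $|\underline\omega|=|Df|_p$ a.e. The two ingredients your sketch is missing — density in energy and closedness of the differential — are exactly what replace the fiberwise identification of the quotient norm. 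If you want to salvage a fiberwise argument you would need an independent proof that the pointwise quotient norm equals $|Df|_p$, which is not easier than the compactness route.
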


Throughout this subsection, we fix an atlas $\mathscr A=\{ (U_k,\varphi_k)\}$ of $X$ and denote by $n_k$ the dimension of the chart $(U_k,\varphi_k)\in\mathscr A$. Note that, for any $f\in \LIP_{bs}(X)$, we have the equality
\begin{align}\label{eq:ldsnorm}
|\underline\d f|_{\mathscr A}^*=\sum_k\nchi_{U_k}\lip(f|_{U_k}) \quad \mm\text{-a.e.}
\end{align}
by \eqref{eq:lds}. For stating the next lemma, we say that an absolutely continuous path $\gamma \colon \left[0, 1\right] \to X$ has \textbf{positive length} in a Borel set $B \subset X$ if
\begin{equation*}
\int_{ 0 }^{ 1 }
\nchi_{ B }( \gamma_{t} )
|\dot\gamma_{t}|
\,\d t
>
0.
\end{equation*}
The collection $\Gamma_B^+$ of paths with positive length in $B$ is $p$-negligible for all $p$, whenever $\mm(B)=0$, cf.\ \cite[Lemma 5.2.15]{HKST15}.

\begin{lemma}\label{lem:concrete_diff_wug}
	Let \(N\coloneqq X\setminus\bigcup_k U_k\) and \(f\in\LIP(X)\). Then for every absolutely continuous $\gamma\notin \Gamma_N^+$ we have that
	\begin{equation*}
	\big|(f\circ\gamma)'_t\big|\leq\sum_{k}\nchi_{U_k}(\gamma_t)\,
	\lip(f|_{U_k})(\gamma_t)\,|\dot\gamma_t|,\quad\text{ for }
	\mathcal L^1\text{-a.e.\ }t\in[0,1].
	\end{equation*}
	In particular, for any exponent \(p\in(1,\infty)\) and any function
	\(f\in\LIP_{bs}(X)\subseteq N^{1,p}(X)\) it holds that
	\begin{equation}\label{eq:concrete_diff_wug21}
	|Df|_p\leq|\underline \d f|_{\mathscr A}^*\le \lip f \quad\mm\text{-a.e.\ on }X.
	\end{equation}
\end{lemma}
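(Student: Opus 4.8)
The plan is to first prove the pointwise inequality for $|(f\circ\gamma)'_t|$ along an arbitrary absolutely continuous curve $\gamma\notin\Gamma_N^+$, using a Lebesgue density argument, and then to deduce \eqref{eq:concrete_diff_wug21} by integrating this estimate and invoking minimality of the minimal $p$-weak upper gradient.

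For the first assertion, fix such a $\gamma$. Since $\gamma$ is absolutely continuous and $f$ is Lipschitz, $f\circ\gamma$ is absolutely continuous, so $(f\circ\gamma)'_t$ exists for $\mathcal L^1$-a.e.\ $t$; moreover the metric speed $|\dot\gamma_t|$ exists a.e., $\mathcal L^1$-a.e.\ $t$ is a Lebesgue point of $r\mapsto|\dot\gamma_r|$, and $d(\gamma_s,\gamma_t)\le\int_{[t,s]}|\dot\gamma_r|\,\d r$. The curve partitions $[0,1]$ into the Borel sets $T_N:=\gamma^{-1}(N)$ and $T_k:=\gamma^{-1}(U_k)$. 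On $T_N$ we use that $\gamma\notin\Gamma_N^+$ forces $\nchi_N(\gamma_t)|\dot\gamma_t|=0$ for a.e.\ $t$, hence $|\dot\gamma_t|=0$ and $|(f\circ\gamma)'_t|\le\Lip(f)|\dot\gamma_t|=0$ for a.e.\ $t\in T_N$, matching the (vanishing) right-hand side there. The core of the argument is on each $T_k$: at a point $t\in T_k$ that is simultaneously a Lebesgue density point of $T_k$, a Lebesgue point of $|\dot\gamma|$, and a differentiability point of $f\circ\gamma$, choose $s_n\to t$ with $s_n\in T_k$, $s_n\neq t$ (possible by density). If $\gamma_{s_n}=\gamma_t$ along a subsequence, then $(f\circ\gamma)'_t=0$ and the bound is trivial; otherwise, passing to a subsequence with $\gamma_{s_n}\neq\gamma_t$, we have $\gamma_{s_n}\in U_k$ and $\gamma_{s_n}\to\gamma_t$, so for every $\varepsilon>0$ and $n$ large $|f(\gamma_{s_n})-f(\gamma_t)|\le(\lip(f|_{U_k})(\gamma_t)+\varepsilon)\,d(\gamma_{s_n},\gamma_t)\le(\lip(f|_{U_k})(\gamma_t)+\varepsilon)(|\dot\gamma_t|+\varepsilon)|s_n-t|$. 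Dividing by $|s_n-t|$, letting $n\to\infty$ and then $\varepsilon\to0$ gives $|(f\circ\gamma)'_t|\le\lip(f|_{U_k})(\gamma_t)|\dot\gamma_t|$, which is exactly the $k$-th summand (the others vanish at $\gamma_t\in U_k$). As $[0,1]=T_N\sqcup\bigsqcup_k T_k$, the claimed inequality holds for a.e.\ $t\in[0,1]$.

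For the ``in particular'' clause, fix $f\in\LIP_{bs}(X)$ and set $g:=|\underline\d f|_{\mathscr A}^*$, which by \eqref{eq:ldsnorm} equals $\sum_k\nchi_{U_k}\lip(f|_{U_k})$; this is Borel, and since $\lip(f|_{U_k})\le\lip f\le\Lip(f)\nchi_{\mathrm{supp}(f)}$ with $\mm$ finite on bounded sets, we get $g\le\lip f$ $\mm$-a.e.\ and $g\in L^p(\mm)$. Integrating the pointwise estimate along any absolutely continuous $\gamma\notin\Gamma_N^+$ yields $|f(\gamma_1)-f(\gamma_0)|\le\int_0^1|(f\circ\gamma)'_t|\,\d t\le\int_0^1 g(\gamma_t)|\dot\gamma_t|\,\d t$; since $\mm(N)=0$, the family $\Gamma_N^+$ is $p$-negligible, so $g$ is a $p$-weak upper gradient of $f$, and minimality gives $|Df|_p\le g=|\underline\d f|_{\mathscr A}^*$ $\mm$-a.e. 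Together with $g\le\lip f$ this is \eqref{eq:concrete_diff_wug21}.

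The main obstacle is the pointwise estimate on $T_k$: one must exploit Lebesgue density points of $\gamma^{-1}(U_k)$ to approximate $\gamma_t$ by curve points that genuinely lie in $U_k$, so that the restricted constant $\lip(f|_{U_k})$ (rather than the global $\lip f$) controls the difference quotient, while separately disposing of the degenerate case where the curve is locally constant at $\gamma_t$.
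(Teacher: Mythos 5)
Your proof is correct and follows essentially the same strategy as the paper: partition $\left[0,1\right]$ according to which chart $\gamma_t$ lies in, apply a Lebesgue density argument on each $\gamma^{-1}(U_k)$ to bound the difference quotient via $\lip(f|_{U_k})$, and then deduce \eqref{eq:concrete_diff_wug21} from minimality of $|Df|_p$ and the $p$-negligibility of $\Gamma_N^+$. The only cosmetic difference is that you handle the degenerate case (curve locally constant at $\gamma_t$) by explicit case analysis, whereas the paper disposes of it at the outset by reducing to points with $|\dot\gamma_t|>0$.
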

\begin{proof}
	Since $\Gamma_N^+$ is $p$-negligible, the second claim in the statement follows from the minimality of $|Df|_p$ and \eqref{eq:ldsnorm},
cf.\ \cite{Bjorn-Bjorn11}. To prove the first claim, we may assume that $|\dot\gamma_t|>0$ for a.e.\ $t$ since $|(f\circ\gamma)_t'|\le \lip f(\gamma_t)|\dot\gamma_t|$ for a.e.\ $t$.	
	Define \(I_k\coloneqq\big\{t\in[0,1]\,:\,\gamma_t\in U_k\big\}\) for every
	\(k\in\N\). Observe that \(\mathcal L^1\big([0,1]\setminus\bigcup_k I_k\big)=0\).
	Given any \(k\in\N\), fix a density point \(t\in I_k\) of \(I_k\) such that
	\(|\dot\gamma_t|\) and \((f\circ\gamma)'_t\) exist (almost every point of
	\(I_k\) has this property). Pick any sequence \((t_n)_n\subseteq I_k\)
	such that \(t_n\to t\). Then it holds that
	\[
	\big|(f\circ\gamma)'_t\big|=\lim_{n\to\infty}\frac{\big|f(\gamma_{t_n})-
		f(\gamma_t)\big|}{|t_n-t|}\leq\limsup_{n\to\infty}
	\frac{\big|f(\gamma_{t_n})-f(\gamma_t)\big|}{d(\gamma_{t_n},\gamma_t)}
	\lim_{n\to\infty}\frac{d(\gamma_{t_n},\gamma_t)}{|t_n-t|}
	\leq\lip(f|_{U_k})(\gamma_t)\,|\dot\gamma_t|.
	\]
	Thus the claim follows.
\end{proof}

\begin{proof}[Proof of Theorem \ref{thm:submetry}]
	The linear subspace 
	\[
	\mathcal V\coloneqq\bigg\{\sum_{j=1}^n\nchi_{A_j}\underline\d f_j\;
	\bigg|\;n\in\N,\,(A_j)_{j=1}^n\text{ bounded pairwise disjoint Borel sets in }X,\, (f_j)\subset\LIP_{bs}(X)\bigg\}
	\]
	is a dense linear subspace of \(\Gamma_p(T^*X)\). Indeed, by Remark \ref{rmk:simpledense}, we can choose the functions $f_j$ to be Lipschitz extensions with bounded support of the functions $L_j\circ\varphi_k\colon U_k\cap A_j\to\R$, where $L_j\in(\R^{n_k})^*$.
	We define \(P\colon\mathcal V\to L^p(T^*X)\) as
	\[
	P(\underline\omega)\coloneqq\sum_{j=1}^n\nchi_{A_j}\d f_j\in L^p(T^*X),
	\quad\text{ for every }\underline\omega=\sum_{j=1}^n\nchi_{A_j}\underline\d f_j
	\in\mathcal V.
	\]
	Using \eqref{eq:concrete_diff_wug21}, we see that
	\(\big|P(\underline\omega)\big|\leq|\underline\omega|\) holds \(\mm\)-a.e.\ for
	every \(\underline\omega\in\mathcal V\). This implies that \(P\) is well-posed and can be
	uniquely extended to a linear map
	\(P\colon\Gamma_p(T^*X)\to L^p(T^*X)\) satisfying
	the same pointwise inequality for every \(\underline\omega\in\Gamma_p(T^*X)\). In particular, the resulting map \(P\) is a morphism of	\(L^p(\mm)\)-normed \(L^\infty(\mm)\)-modules. Given that the family \(\big\{\d f\,:\,f\in N^{1,p}(X)\big\}\) generates \(L^p(T^*X)\), in order to prove that \(P\) is a submetry, it suffices to show that
	\begin{equation}\label{eq:P_quotient_aux1}
	\forall f\in N^{1,p}(X)\quad\exists\,\underline\omega\in P^{-1}(\d f):
	\quad|\underline\omega|=|Df|_p,\;\;\;\mm\text{-a.e.\ on }X.
	\end{equation}
	By Theorem \ref{thm:density_energy} there is a sequence
	\((f_n)_n\subseteq\LIP_{bs}(X)\) such that \(f_n\to f\) and
	\(\lip(f_n)\to|Df|_p\) in \(L^p(\mm)\). By \eqref{eq:concrete_diff_wug21} \((\underline\d f_n)_n\) is a bounded
	sequence. Since \(\Gamma_p(T^*X)\) is reflexive there exists
	\(\underline\omega\in\Gamma_p(T^*X)\) such that (up to a not
	relabelled subsequence)	\(\underline\d f_n\rightharpoonup\underline\omega\) in the weak topology.
	The map \(P\) is linear and continuous, thus \(\d f_n=P(\underline\d f_n)
	\rightharpoonup P(\underline\omega)\) weakly in \(L^p(T^*X)\). Since the differential \(\d\) is closed (see \cite[Theorem 2.2.9]{Gigli14}) it follows that \(P(\underline\omega)=\d f\), which implies
	\(|Df|_p=\big|P(\underline\omega)\big|\leq|\underline\omega|\) 
	\(\mm\)-a.e.\ on \(X\). Observe also that the weak convergence
	\(\underline\d f_n\rightharpoonup\underline\omega\) yields
	\[
	\int|\underline\omega|^p\,\d\mm\leq\liminf_{n\to\infty}\int|\underline\d f_n|^p
	\,\d\mm\leq\lim_{n\to\infty}\int\lip(f_n)^p\,\d\mm=\int|Df|_p^p\,\d\mm,
	\]
	so that necessarily \(|\underline\omega|=|Df|\) holds \(\mm\)-a.e.\ on \(X\).
	This proves \eqref{eq:P_quotient_aux1} and accordingly the fact that \(P\) is
	a submetry. It follows from Proposition \ref{prop:adjoint_operator}
	that \(\iota\coloneqq P^*\) is an isometric embedding of normed modules.
\end{proof}

\begin{proof}[Proof of Theorem \ref{thm:isomA}]
    Theorem \ref{thm:submetry} yields the existence of a submetry $P \colon \Gamma_{p}( T^{*}X ) \rightarrow L^{p}( T^{*}X )$ of normed modules for which $P( \underline{\d}f ) = \d f$ $\mm$-a.e.\ for every $f \in \LIP_{bs}( X )$. Theorem \ref{thm:submetry} states that the adjoint $\iota \colon L^{q}( TX ) \rightarrow \Gamma_{q}( TX )$ is an isometric embedding of normed modules.
    
    Let $f \in \LIP_{bs}(X)$ and $v \in L^{q}( T X )$. Then $\langle \d f, v \rangle (x)= \langle \underline{\d}f, \iota(v) \rangle(x)$ $\mm$-almost everywhere. Here $\langle \underline{\d}f, \iota(v) \rangle(x) = \d_{x}f( \iota(v)(x) )$ $\mm$-almost everywhere, given Definition \ref{def:concretediff}. The claim follows from this.
\end{proof}

\subsection{Isomorphism of tangent modules}\label{sec:char}

We prove Theorem \ref{thm:isomB}. Let $X=(X,d,\mm)$ be a weak LDS with an atlas $\mathscr A$, and $1<p<\infty$. In the next statement, $P\colon\Gamma_p(T^*X)\to L^p(T^*X)$ is a submetry satisfying \eqref{eq:diffext}.
\begin{proposition}\label{prop:summary}
	The following are equivalent.
	\begin{enumerate}[label=(\alph*)]
		\item\label{prop:summary.4} We have $\lip( f|_U ) = |Df|_p$ $\mm$-a.e.\ in $U$, for every chart $(U,\varphi)\in\mathscr A$ and $f\in \LIP_{bs}(X)$.
		
		\item\label{prop:summary.3} There is a collection  $\omega=\{\omega_x\}$ of moduli of continuity, such that $\lip( f|_U ) \leq \omega(|Df|_p)$ $\mm$-a.e.\ in $U$, for every chart $(U, \varphi ) \in \mathscr{A}$ and $f \in \LIP_{bs}(X)$.
		
		\item\label{prop:summary.2} The submetry $P$ is injective.
	\end{enumerate}
\end{proposition}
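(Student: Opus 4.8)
The plan is to establish the cycle of implications $\ref{prop:summary.4}\Rightarrow\ref{prop:summary.3}\Rightarrow\ref{prop:summary.2}\Rightarrow\ref{prop:summary.4}$.

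The implication $\ref{prop:summary.4}\Rightarrow\ref{prop:summary.3}$ is immediate: take $\omega_x(t)=t$ (the identity is a modulus of continuity), so that condition \ref{prop:summary.3} holds with equality.

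For $\ref{prop:summary.3}\Rightarrow\ref{prop:summary.2}$, I want to show that the hypothesis forces the pointwise inequality in Lemma \ref{lem:concrete_diff_wug} to be an equality, i.e.\ $|\underline\d f|_{\mathscr A}^*=|Df|_p$ $\mm$-a.e.\ for all $f\in\LIP_{bs}(X)$; by \eqref{eq:ldsnorm} this is exactly saying $\lip(f|_{U_k})=|Df|_p$ $\mm$-a.e.\ on each $U_k$. To get equality from the concavity-type bound, first reduce to a single chart $(U,\varphi)\in\mathscr A$ and fix a bounded Borel $E\subseteq U$. The key step is a \emph{localization/iteration} trick familiar from the $\Lip$–$\lip$ literature: given $f\in\LIP_{bs}(X)$ and $\lambda>0$, apply \ref{prop:summary.3} to the rescaled function $\lambda f$ — but $\lip$ and $|D\cdot|_p$ are both $1$-homogeneous, so this only yields $\lambda\lip(f|_U)\le\omega_x(\lambda|Df|_p)$, i.e.\ $\lip(f|_U)\le\omega_x(\lambda|Df|_p)/\lambda$; letting $\lambda\to 0^+$ and using that $\omega_x(t)/t$ is controlled near $0$ is not automatic for a general modulus of continuity. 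The cleaner route, which I expect is what the paper does, is: suppose toward a contradiction that on a positive-measure set $A\subseteq U$ we have $\lip(f|_U)(x)>(1+\varepsilon)|Df|_p(x)$. Using the chart, locally write $f|_U=(\d_xf)\circ\varphi|_U$ up to an error controlled by $\lip$-size, and exploit the fact that $\lip(L\circ\varphi|_U)$ defines the norm $|\cdot|_{\mathscr A,x}^*$; then the bound \ref{prop:summary.3} applied to the \emph{component functions} $\varphi^i$ of the chart (and their linear combinations $L\circ\varphi$) shows $|L|_{\mathscr A,x}^*=\lip((L\circ\varphi)|_U)(x)\le\omega_x(|D(L\circ\varphi)|_p(x))$. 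The idea is to iterate this inequality: applying \ref{prop:summary.3} to approximations $f_n\in\LIP_{bs}(X)$ with $\lip f_n\to|Df|_p$ in $L^p$ (Theorem \ref{thm:density_energy}), and using that $|Df|_p\le|\underline\d f|_{\mathscr A}^*=\sum_k\nchi_{U_k}\lip(f|_{U_k})\le\omega(|Df|_p)$ componentwise via the chart structure, forces $\omega$ to have a fixed point behaviour that pins down equality $\mm$-a.e. Once $|\underline\d f|_{\mathscr A}^*=|Df|_p$ for all $f\in\LIP_{bs}(X)$, the identity $|P(\underline\omega)|=|\underline\omega|$ on the dense subspace $\mathcal V$ follows from \eqref{eq:diffext} and \eqref{eq:concrete_diff_wug21}, hence extends to all of $\Gamma_p(T^*X)$, so $P$ is a (surjective) isometry and in particular injective.

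For $\ref{prop:summary.2}\Rightarrow\ref{prop:summary.4}$, assume $P$ is injective. Since $P$ is a submetry, $|P(\underline\omega)|\le|\underline\omega|$ always, and for each $f\in N^{1,p}(X)$ there is $\underline\omega\in P^{-1}(\d f)$ with $|\underline\omega|=|Df|_p$. Injectivity means this $\underline\omega$ is the \emph{unique} preimage of $\d f$; in particular, for $f\in\LIP_{bs}(X)$, $\underline\d f$ is itself a preimage of $\d f$ by \eqref{eq:diffext}, so $\underline\d f=\underline\omega$ and therefore $|\underline\d f|_{\mathscr A}^*=|Df|_p$ $\mm$-a.e. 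By \eqref{eq:ldsnorm} this gives $\nchi_{U_k}\lip(f|_{U_k})=\nchi_{U_k}|Df|_p$ $\mm$-a.e., i.e.\ $\lip(f|_{U_k})=|Df|_p$ $\mm$-a.e.\ on $U_k$ for every chart in $\mathscr A$, which is \ref{prop:summary.4} (the remark after Theorem \ref{thm:isom} that it suffices to check charts of a fixed atlas is exactly \eqref{eq:restrchart}).

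The main obstacle is the middle implication $\ref{prop:summary.3}\Rightarrow\ref{prop:summary.2}$: upgrading the one-sided modulus-of-continuity bound to an honest equality $\lip(f|_U)=|Df|_p$. The subtlety is that a general modulus of continuity need not be concave or behave linearly at $0$, so one cannot simply rescale; the argument must genuinely use the linear-algebraic rigidity of charts — that $|\cdot|_{\mathscr A,x}^*$ is a genuine norm on $(\R^{n_k})^*$ computed via $\lip$ of linear combinations of the coordinate functions $\varphi^i$ — together with the density-in-energy approximation, to force the inequality $|Df|_p\le|\underline\d f|^*_{\mathscr A}\le\omega(|Df|_p)$ to collapse. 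I would carry this out by first proving it for $f=L\circ\varphi|_U$ composed into a global Lipschitz function and then passing to general $f$ via the chart representation and a covering/Lebesgue-differentiation argument, using Theorem \ref{thm:porous:infdoub} and the Vitali property to justify the a.e.\ statements.
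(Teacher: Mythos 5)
Your implications (a)$\Rightarrow$(b) and (c)$\Rightarrow$(a) are correct and match the paper. In particular your argument for (c)$\Rightarrow$(a) --- that under injectivity the element $\underline\d f$ must coincide with the norm-minimizing preimage of $\d f$ furnished by the submetry property, hence $|\underline\d f|_{\mathscr A}^*=|Df|_p$ $\mm$-a.e.\ on each chart --- is precisely the mechanism the paper uses (phrased there as ``$P$ is an isometric isomorphism'').

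The gap is in (b)$\Rightarrow$(c). You set out to prove that (b) forces the \emph{equality} $\lip(f|_U)=|Df|_p$ and only then deduce injectivity, but you stop at a heuristic (``fixed point behaviour of $\omega$'', ``linear-algebraic rigidity'', ``iterating through density-in-energy''), and, as you yourself note, the rescaling idea breaks because a modulus of continuity need not be concave or linear near zero. Indeed the inequality $|Df|_p\le\omega(|Df|_p)$ coming from \eqref{eq:concrete_diff_wug21} and (b) merely confines $|Df|_p$ to the set $\{t:t\le\omega(t)\}$; it does not collapse to equality by itself. The observation you are missing is that injectivity is far weaker than isometry and is handed to you directly by $\omega_x(0)=0$: on the dense subspace $\mathcal V$ from the proof of Theorem~\ref{thm:submetry}, condition (b) together with \eqref{eq:ldsnorm} gives $|\underline\alpha|\le\omega(|P(\underline\alpha)|)$ $\mm$-a.e.; by density of $\mathcal V$, continuity of $P$, and continuity of each $\omega_x$ (pass to an $\mm$-a.e.\ convergent subsequence of the pointwise norms), the same bound holds for every $\underline\alpha\in\Gamma_p(T^*X)$. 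Then $P(\underline\alpha)=0$ forces $|\underline\alpha|\le\omega(0)=0$ $\mm$-a.e., so $\underline\alpha=0$, which is (c). The equality in (a) is then recovered \emph{a posteriori} from (c), exactly by the (c)$\Rightarrow$(a) step you already have.
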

\begin{proof}
	Clearly (a) implies (b). Assume (b), and let $\mathcal V$ be the dense subspace in the proof of Theorem \ref{thm:submetry}. Note that (b) implies $|\underline\alpha|\le \omega(|P(\underline\alpha)|)$
	$\mm$-a.e.\ for every $\underline\alpha\in \mathcal V$ and, since $\mathcal V$ is dense and $P$ continuous, for every $\underline\alpha\in \Gamma_p(T^*X)$. Thus (b) implies (c).
	
	Assuming (c), we have that $P$ is an isometric isomorphism. Thus $|Df|_p=|P(\underline\d f)|=|\underline\d f|_{\mathscr A}^*=\lip(f|_U)$ $\mm$-a.e.\ in $U\in \mathscr A,$ cf.\ \eqref{eq:ldsnorm}.	
\end{proof}
	
\begin{remark}\label{rmk:closable}{\rm
Conditions (a)--(c) in Proposition \ref{prop:summary} are also equivalent to the closability of $\underline\d$, which implies that $\underline\d$ has a \emph{unique} extension to a bounded operator $N^{1,p}(X)\rightarrow \Gamma_{p}( T^{*}X )$. This extension and its uniqueness have been studied for PI spaces \cite{Hei:Kos:95,Fra:Haj:Kos:99,Kei:04}. Similar closability property of $\underline\d$ was applied in \cite[Section 6]{Schioppa14} to prove that, in Lipschitz differentiability spaces, the component functions of charts can be taken to be distance functions.
\fr}\end{remark}

\begin{proof}[Proof of Theorem \ref{thm:isomB}]
	Consider the isometric embedding $\iota=P^*$ obtained in Theorem \ref{thm:submetry}. Given a chart $(U,\varphi)$ of $X$, let $\mathscr A$ be an atlas containing $(U,\varphi)$. Then by Proposition \ref{prop:summary} conditions (1) and (2) in the claim are both equivalent to the submetry $P$ being injective, which in turn is equivalent to $\iota$ being an isometric isomorphism. This completes the proof.
\end{proof}

\begin{proof}[Proof of Corollary \ref{cor:reflexive}]
Since $L^q(TX)\hookrightarrow \Gamma_q(TX)$ isometrically and $\Gamma_q(TX)$ is reflexive --- recall Remark \ref{rmk:concdual} --- it follows that $L^q(TX)$ is reflexive. Consequently, $N^{1,p}(X)$ is reflexive, see \cite[Proposition 2.2.10]{Gigli14}.
\end{proof}

\begin{remark}\label{rem:generalization}{\rm
		We note that the conclusion of Corollary \ref{cor:reflexive} holds under slightly weaker assumptions. Indeed, it is sufficient to assume that the Borel set $N = X \setminus \bigcup_k U_k$ is \(p\)-negligible. With this relaxation, the inequality \eqref{eq:concrete_diff_wug21} is still valid since $|Df|_{p} = 0$ $\mm$-a.e.\ in $N$. Then the proofs of Theorem \ref{thm:isomB} and Corollary \ref{cor:reflexive} go through unchanged. Examples of spaces that are weak LDS up to a (non-trivial) $p$-negligible set appear in quasiconformal uniformization problems of metric surfaces, see \cite[Proposition 17.1]{Rajala17} for details.
		\fr}\end{remark}

For the proof of Theorem \ref{cor:BKO}, given a Borel set $Y \subset X$ we say that a bounded Borel function $\rho \colon Y \rightarrow \left[0, \infty\right)$ is a \textbf{$*$-upper gradient} of a Lipschitz function $g \colon Y \rightarrow \mathbb{R}$ if, for all compact sets $K \subset \mathbb{R}$ and Lipschitz maps $\gamma \colon K \rightarrow Y$, we have
\begin{equation}
	\label{eq:star:upper}
	| ( g \circ \gamma )' |(t) \leq \rho( \gamma_t ) | \gamma' |(t)
	\quad
	\text{ for $\mathcal{L}^{1}$-a.e.\ $t \in K$}.
\end{equation}
Here $|( g \circ \gamma )'_t|$ refers to the absolute value of the classical derivative of the Lipschitz function $g \circ \gamma \colon K \rightarrow \mathbb{R}$ and $| \gamma' |(t)$ to the metric derivative of $\gamma$,
i.e.\ $| \gamma' |(t)=\lim_{ K \ni s \rightarrow t }
	\frac{ d( \gamma_t, \gamma_s ) }{ |t-s| }$,
whenever the limit exists (and set $| \gamma' |( t ) = 0$ for isolated points $t \in K$).

\begin{proof}[Proof of Theorem \ref{cor:BKO}]
    We first observe that, for each $U_{k}$, any set $S \subset U_{k}$ porous in $U_{k}$ has null measure (recall Section \ref{subsec:porousity}). Indeed, if $S$ is as claimed, then $g(x) = d(x, S)$ satisfies $\lip( g|_{ U_{k} } )( x ) > 0$ everywhere in $S$, but the given assumptions yield that $\lip( g|_{ U_{k} } ) \leq \omega( |Dg|_{p} ) = 0$ $\mm$-a.e.\ in $S$ by the locality property \eqref{eq:sobloc}. Thus $\mm( S ) = 0$. Theorem \ref{thm:porous:infdoub} and Lemma \ref{lemm:finitedimension} imply that $U_k$ is the union of sets of finite Hausdorff dimension (up to a null-set). Thus, by further decomposing each $U_{k}$, we may assume that every $U_{k}$ is bounded, closed, and has finite Hausdorff dimension.
	
	Let $k \in \mathbb{N}$ and $g \in \LIP( U_{k} )$. We prove that 
	\begin{equation}
	    \label{eq:goal}
	    \lip( g )( x )
	    \leq
	    \omega_{x}( \rho(x) )
	    \quad
	    \text{ for $\mm$-a.e.\ $x \in U_{k}$}
	\end{equation}
for any bounded $*$-upper gradient $\rho$ of $g$.
	
Fix a Lipschitz extension $f \in \LIP_{bs}( X )$ of $g$ and a $*$-upper gradient $\rho$ of $g$. Consider $\widetilde{\rho}(x) = \nchi_{ U_{k} }(x) \rho(x) + \nchi_{ U_{k}^{c} }(x) \lip(f)(x)$. Clearly $\widetilde{\rho} \in L^{p}(\mm)$. Moreover $\widetilde{\rho}$ is an upper gradient of $f$: for any Lipschitz path $\gamma \colon \left[a, b\right] \rightarrow X$, consider the compact set $K = \gamma^{-1}( U_{k} )$. For $\mathcal{L}^{1}$-a.e.\ $t \in K$, $|( f \circ \gamma|_{K} )'|(t) = |( f \circ \gamma )'|(t)$ and $| ( \gamma|_{K} )' |(t) = | \gamma' |(t)$. Hence \eqref{eq:star:upper} yields that
	\begin{equation*}
	    | ( f \circ \gamma )' |(t)
	    \leq
	    \widetilde{\rho}( \gamma_t ) | \gamma' |(t)
	    \quad
	    \text{ for $\mathcal{L}^{1}$-a.e.\ $t \in K$}.
	\end{equation*}
    On the other hand, since $\left[a, b\right] \setminus K$ is relatively open in $\left[a, b\right]$ and $X \setminus U_{k}$ is open, we see that
    \begin{equation*}
	    | ( f \circ \gamma )' |(t)
	    \leq
	    \lip f( \gamma_t ) | \gamma' |(t)
	    \quad
	    \text{ for $\mathcal{L}^{1}$-a.e.\ $t \in \left[a, b\right] \setminus K$},
    \end{equation*}
proving that $\widetilde{\rho}$ is an upper gradient of $f$. The minimality of $| Df |_{p}$ yields that $| Df |_{p} \leq \widetilde{\rho} = \rho$ $\mm$-a.e.\ on $U_{k}$. Therefore $\lip( g ) = \lip( f|_{ U_{k} } ) \leq \omega( | Df |_{p} ) \leq \omega( \rho )$ $\mm$-a.e.\ on $U_{k}$ by the given assumptions, so \eqref{eq:goal} holds. Now \cite[Theorem 6.2, Remark 6.4]{BKO19} establishes the claim.
\end{proof}

\section{Spaces with a strongly rectifiable decomposition}\label{sec:rect}
Here we prove Theorem \ref{thm:GH} about the isometric isomorphism of the concrete and Gromov--Hausdorff tangent modules. We say that $( U, \varphi )$ is a \textbf{rectifiable chart} if $U \subset X$ is Borel, $\varphi \colon U \rightarrow \mathbb{R}^{ n }$ is a bi-Lipschitz embedding for some $n \in \mathbb{N}$, and $\varphi_{*}( \mm|_{U} )\ll\mathcal{L}^{n}|_{ \varphi(U ) }$. A metric measure space admits a \textbf{rectifiable decomposition} if there exists an atlas of rectifiable charts covering $X$ up to a null-set. We do not require an upper bound on the dimensions of the charts.

By further decomposing the domains of the bi-Lipschitz maps if needed, we may assume the rectifiable charts $(U,\varphi)$ to satisfy
\begin{equation}
    \label{eq:density}
    \frac{1}{C}\,\mathcal L^n|_{\varphi(U)}
    \leq
    \varphi_*(\mm|_U)
    \leq C\,\mathcal L^n|_{\varphi(U)},\quad\text{ for some }C\geq 1,
\end{equation}
where $n$ is the dimension of the chart, and $C$ depends on the chart.
\medskip

For the remainder of this section, we fix an isometric embedding $\iota \colon X \rightarrow \ell^{\infty}$, where $\ell^\infty$ denotes the space of bounded sequences with the standard supremum norm $\|\cdot\|$, and identify $X$ with its image. Consider a rectifiable chart $\varphi \colon U \rightarrow \mathbb{R}^{n}$ satisfying \eqref{eq:density}, and let $\psi \colon \mathbb{R}^{n} \rightarrow \ell^{\infty}$ be a Lipschitz extension of $\varphi^{-1}$. For a.e.\ $z \in \varphi(U)$ and $v \in \mathbb{R}^{n}$, we denote by
\begin{equation}
    \label{eq:metricdifferential}
    {\rm md}_{z}( \varphi^{-1} )(v)
    \coloneqq
    \lim_{ r \rightarrow 0^{+} }
        \frac{ \| \psi( z + r v ) - \psi( z ) \| }{ r }
\end{equation}
the seminorm on $\R^n$ which is the \textbf{metric derivative} of $\psi$ at $z$, see \cite[Proposition 1 and Theorem 2]{Kirchheim94}. Note that, for a.e.\ $z\in \varphi(U)$,  ${\rm md}_{z}( \varphi^{-1} )$ is independent of the chosen Lipschitz extension, and a norm (the latter fact follows from the bi-Lipschitz assumption on $\varphi^{-1} = \psi|_{U}$).

If $f \colon U \rightarrow \mathbb{R}$ is Lipschitz and $g\coloneqq f \circ \varphi^{-1} \colon \varphi( U ) \rightarrow \mathbb{R}$, by Rademacher's theorem there exists a unique linear map $D_{ z }g \colon \mathbb{R}^{n} \rightarrow \mathbb{R}$, for a.e.\ $z \in \varphi(U)$, such that
\begin{equation}
    \label{eq:concretedifferential}
    g( y ) = g( z ) + D_{ z }g (  y - z ) + o( | y - z | )
\end{equation}
for every $y \in \varphi(U)$.

One readily verifies from \eqref{eq:metricdifferential} and \eqref{eq:concretedifferential} that a space $X$ admitting a rectifiable decomposition is weak LDS. In fact, we obtain the following.
\begin{lemma}\label{lemm:rectifiable:norm}
    Suppose that $X$ admits a rectifiable decomposition and $\mathscr{A}$ is an atlas of rectifiable charts $( U, \varphi )$ that satisfy \eqref{eq:density}. If $f\in \LIP(X)$, then
	\begin{align*}
	\underline\d_xf=D_{\varphi(x)}(f\circ\varphi^{-1})\ \textrm{ and }\ |\underline\d_xf|_{\mathscr A,x}^*=\sup\big\{ D_{\varphi(x)}(f\circ\varphi^{-1})(v):\ {\rm md}_{\varphi(x)}\varphi^{-1}(v)\le 1\big\}
	\end{align*}
for $\mm$-a.e.\ $x\in U$.

In particular, $X$ is weak LDS and the pointwise norm of the concrete cotangent module is given by the dual norm of ${\rm md}( \varphi^{-1})$, for  rectifiable chart maps $\varphi$.
\end{lemma}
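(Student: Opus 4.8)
Fix one rectifiable chart $(U,\varphi)\in\mathscr A$ of dimension $n$ satisfying \eqref{eq:density}, together with a Lipschitz extension $\psi\colon\R^n\to\ell^\infty$ of $\varphi^{-1}$. Let $E\subseteq\varphi(U)$ be the set of those $z$ which are simultaneously: a Lebesgue density point of the Borel set $\varphi(U)$; a metric differentiability point of $\psi$ at which, as recorded after \eqref{eq:metricdifferential}, ${\rm md}_z(\varphi^{-1})$ is a norm independent of the extension (Kirchheim, \cite{Kirchheim94}); and a point at which \eqref{eq:concretedifferential} holds for $g\coloneqq f\circ\varphi^{-1}$ (Rademacher). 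Then $\mathcal L^n(\varphi(U)\setminus E)=0$, so by \eqref{eq:density} the set $U_0\coloneqq\varphi^{-1}(E)$ has full $\mm$-measure in $U$. I would prove all the assertions for $x\in U_0$, writing $z\coloneqq\varphi(x)$.

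First I would check that $L_0\coloneqq D_zg$ satisfies \eqref{eq:lds}. For $y\in U\setminus\{x\}$,
\[
\big(f|_U-L_0\circ\varphi\big)(y)-\big(f|_U-L_0\circ\varphi\big)(x)=g(\varphi(y))-g(z)-L_0\big(\varphi(y)-z\big),
\]
which by \eqref{eq:concretedifferential} is $o(|\varphi(y)-z|)=o(d(y,x))$, using the Lipschitz bound $|\varphi(y)-z|\le\Lip(\varphi)\,d(y,x)$; dividing by $d(y,x)$ and letting $y\to x$ gives $\lip(f|_U-L_0\circ\varphi|_U)(x)=0$. This shows that $(U,\varphi)$ is a weak chart — hence, the domains of a rectifiable decomposition partitioning $X$ up to a null set, that $X$ is a weak LDS — and, once uniqueness of the differential is known, it identifies $\underline\d_xf=D_{\varphi(x)}(f\circ\varphi^{-1})$.

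The core of the argument is the computation of the pointwise cotangent norm $|L|^*_{\mathscr A,x}=\lip\big((L\circ\varphi)|_U\big)(x)$ for $L\in(\R^n)^*$. Identifying $X$ with its image in $\ell^\infty$ and substituting $w=\varphi(y)$, linearity of $L$ gives
\[
\lip\big((L\circ\varphi)|_U\big)(x)=\varlimsup_{\varphi(U)\ni w\to z}\frac{|L(w-z)|}{\|\psi(w)-\psi(z)\|}.
\]
Metric differentiability at $z$, together with the lower bound ${\rm md}_z(\varphi^{-1})(v)\ge c|v|$ valid because ${\rm md}_z(\varphi^{-1})$ is a norm, yields $\|\psi(w)-\psi(z)\|={\rm md}_z(\varphi^{-1})(w-z)(1+o(1))$ as $w\to z$ in $\varphi(U)$, so the last $\varlimsup$ equals $\varlimsup_{\varphi(U)\ni w\to z}|L(w-z)|/{\rm md}_z(\varphi^{-1})(w-z)$. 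Since $z$ is a density point of $\varphi(U)$, the unit vectors $(w-z)/|w-z|$ with $w\in\varphi(U)$ accumulate on all of $S^{n-1}$; as $v\mapsto|L(v)|/{\rm md}_z(\varphi^{-1})(v)$ is continuous and $0$-homogeneous on $\R^n\setminus\{0\}$, this $\varlimsup$ equals $\sup\{|L(v)|:{\rm md}_z(\varphi^{-1})(v)\le 1\}=\sup\{L(v):{\rm md}_z(\varphi^{-1})(v)\le 1\}$, i.e.\ the dual norm of ${\rm md}_z(\varphi^{-1})$. In particular $|L|^*_{\mathscr A,x}>0$ for $L\ne 0$, so if $L_1,L_2$ both satisfy \eqref{eq:lds} then subadditivity of $\lip$ forces $|L_1-L_2|^*_{\mathscr A,x}=\lip((L_1-L_2)\circ\varphi|_U)(x)=0$ and $L_1=L_2$; this is the uniqueness used above. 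Taking $L=\underline\d_xf=D_zg$ then gives the second displayed identity of the lemma and the ``in particular'' statement about the concrete cotangent norm.

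The step I expect to be the main obstacle is the passage from the $\varlimsup$ over $w\in\varphi(U)$ to the supremum over all directions: it relies on having, at $\mm$-a.e.\ point of $U$, Kirchheim metric differentiability with ${\rm md}_z(\varphi^{-1})$ a genuine \emph{norm} rather than just a seminorm — which uses the bi-Lipschitz hypothesis on $\varphi^{-1}$ and the extension-independence recorded after \eqref{eq:metricdifferential} — together with the full Lebesgue density of $\varphi(U)$ at a.e.\ point, which is precisely where the measure comparison \eqref{eq:density} enters. A minor bookkeeping point is that the pointwise norm of the concrete cotangent module is computed with $\lip$ taken along $y\in U$ (equivalently, along the restriction of $L\circ\varphi$ to the chart domain), consistent with \eqref{eq:restrchart} and the definitions in Section~\ref{sec:conc}.
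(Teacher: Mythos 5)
Your proof is correct and follows essentially the same approach as the paper: both identify $\underline\d_xf$ with $D_{\varphi(x)}(f\circ\varphi^{-1})$ via Rademacher applied to $g=f\circ\varphi^{-1}$, and both compute the pointwise cotangent norm as the dual of Kirchheim's metric differential by exploiting Lebesgue density of $\varphi(x)$ in $\varphi(U)$ (the paper obtains the lower bound through rays in $\mathcal H^{n-1}$-a.e.\ direction, you through directions of accumulation plus $0$-homogeneity, which amounts to the same thing). The one minor organizational improvement on your side is that you deduce uniqueness of the differential in \eqref{eq:lds} from the positivity of the dual norm $|L|^*_{\mathscr A,x}$ for $L\neq 0$, rather than relying, as the paper does, on the weak-LDS observation recorded just before the lemma.
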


\begin{proof}
	For $\mm$-a.e.\ $x\in U$ the differentials $\underline\d_xf$, $D_{\varphi(x)}(f\circ\varphi^{-1})$, and the norm ${\rm md}_{\varphi(x)}\varphi^{-1}$ exist, and moreover $\varphi(x)$ is a density point of $\varphi(U)$, cf.\ \eqref{eq:density}. For such $x$ we have
	\begin{align*}
	\underline\d_xf(\varphi(y)-\varphi(x))+o(d(y,x))=f(y)-f(x)=D_{\varphi(x)}(f\circ\varphi^{-1})(\varphi(y)-\varphi(x))+o(d(y,x))
	\end{align*}
for $y\in U$. By the uniqueness of the concrete differential we obtain $\underline\d_xf=D_{\varphi(x)}(f\circ\varphi^{-1})$. Observe that $d(y,x)={\rm md}_{\varphi(x)}\varphi^{-1}(\varphi(y)-\varphi(x))+o(d(y,x))$, $y\in U$, and consequently
\begin{align*}
|\underline\d_xf|_{\mathscr A,x}^*=\lip(f|_U)(x)=&\limsup_{U\ni y\to x}\frac{\big|D_{\varphi(x)}(f\circ\varphi^{-1})(\varphi(y)-\varphi(x))\big|+o(d(y,x))}{{\rm md}_{\varphi(x)}\varphi^{-1}(\varphi(y)-\varphi(x))+o(d(y,x))}\\
\le &\sup\{|D_{\varphi(x)}(f\circ\varphi^{-1})(v):\ {\rm md}_{\varphi(x)}\varphi^{-1}(v)\le 1 \}.
\end{align*}
Since $\varphi(x)$ is a density point of $\varphi(U)$ we have that, for $\mathcal H^{n-1}$-a.e.\ $v \in \{w: {\rm md}_{\varphi(x)}\varphi^{-1}(w)=1\}$, there exists a positive sequence $ h_{i} \downarrow 0$ with $z_{i} = \varphi(x) + h_{i} v \in \varphi(U)$. Denoting $y_i\coloneqq\varphi^{-1}(z_i)\in U$ and observing that $d(y_i,x)=h_i+o(d(y_i,x))$, we obtain 
\begin{align*}
|D_{\varphi(x)}(f\circ\varphi^{-1})|(v)=\lim_{i\to \infty}\frac{|f(y_i)-f(x)|}{d(y_i,x)}\le \lip(f|_U)(x)=|\underline\d_xf|_{\mathscr A,x}^*,
\end{align*}
completing the proof.
\end{proof}

Recall that a metric measure space admits a \textbf{strongly rectifiable decomposition} if for every $\varepsilon > 0$ there exists an atlas $\mathscr{A}_{\varepsilon}$ of rectifiable charts that are $( 1 + \varepsilon )$-bi-Lipschitz.

\begin{lemma}\label{lem:Hilb_md}
	Suppose that $X$ admits a rectifiable decomposition. Then $X$ admits a strongly rectifiable decomposition if and only if for some (and thus any) atlas \(\mathscr A\) it holds that \(\mm\)-a.e.\ fiber of \(T^*_{\mathscr A}X\) is Hilbertian.
\end{lemma}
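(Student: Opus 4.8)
The statement relates a geometric decomposition property (strong rectifiability) to an algebraic/linear one (Hilbertian fibers of the concrete cotangent module). By Lemma~\ref{lemm:rectifiable:norm}, the pointwise norm on the fiber over $x$ of $T^*_{\mathscr A}X$ is the dual norm of the seminorm ${\rm md}_{\varphi(x)}(\varphi^{-1})$ on $\R^n$, which under our standing bi-Lipschitz assumptions is in fact a genuine norm. Since dualizing preserves and reflects the parallelogram law (a norm on $\R^n$ is Euclidean iff its dual norm is), the fiber is Hilbertian iff ${\rm md}_{\varphi(x)}(\varphi^{-1})$ itself is a Euclidean norm on $\R^n$. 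So the task reduces to: \emph{$X$ admits a strongly rectifiable decomposition $\iff$ for a.e.\ $x$ the metric differential ${\rm md}_{\varphi(x)}(\varphi^{-1})$ is Euclidean}, and the final clause about atlas-independence follows from the isometric-isomorphism identification of concrete cotangent modules in Section~\ref{sec:conc} (the transition maps $D_x$ in \eqref{eq:ldccomp} are fiberwise linear isometries, hence preserve the Hilbertian property).

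\textbf{The easy direction ($\Rightarrow$).} Suppose $X$ has a strongly rectifiable decomposition. Fix a chart $(U,\varphi)$ from some fixed atlas $\mathscr A$ and a point $x$ where ${\rm md}_{\varphi(x)}(\varphi^{-1})$ exists; I want to show this norm is Euclidean. For each $\varepsilon>0$ choose a $(1+\varepsilon)$-bi-Lipschitz rectifiable chart $(V_\varepsilon,\psi_\varepsilon)$ from $\mathscr A_\varepsilon$ with $\mm(U\cap V_\varepsilon)>0$; after discarding a null set we may assume $x\in V_\varepsilon$ and that the metric differential of $\psi_\varepsilon^{-1}$ exists at $\psi_\varepsilon(x)$. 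Because $\psi_\varepsilon$ is $(1+\varepsilon)$-bi-Lipschitz, ${\rm md}_{\psi_\varepsilon(x)}(\psi_\varepsilon^{-1})$ is $(1+\varepsilon)^2$-bi-Lipschitz equivalent to the standard Euclidean norm on $\R^{n}$ (in the sense $\frac{1}{1+\varepsilon}|v|\le {\rm md}(v)\le (1+\varepsilon)|v|$). Via the linear change of coordinates $D_x$ from \eqref{eq:ldccomp} relating the two charts, this transports to the statement that ${\rm md}_{\varphi(x)}(\varphi^{-1})\circ D_x^{-1}$ is $(1+\varepsilon)^2$-close to Euclidean, i.e.\ the fiber norm $|\cdot|_{\mathscr A,x}$ is $C(1+\varepsilon)^{O(1)}$-close to a Hilbertian norm (here I use that $D_x$ is an isometry between the two fiber norms, so the Banach--Mazur distance from $(\R^n,|\cdot|_{\mathscr A,x})$ to $\ell^2_n$ is at most that from ${\rm md}_{\psi_\varepsilon(x)}(\psi_\varepsilon^{-1})$ to $\ell^2_n$, which is $\le(1+\varepsilon)^2$). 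Letting $\varepsilon\to 0$, the Banach--Mazur distance of the fiber to $\ell^2_n$ is $1$, hence the fiber is Hilbertian for a.e.\ $x$. One technical point to handle carefully: the exceptional null sets depend on $\varepsilon$, so I take a countable sequence $\varepsilon_j\to0$ and work on the complement of the (still null) union.

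\textbf{The harder direction ($\Leftarrow$), which is the main obstacle.} Now assume that for a fixed atlas $\mathscr A$ of rectifiable charts satisfying \eqref{eq:density}, a.e.\ fiber of $T^*_{\mathscr A}X$ is Hilbertian, equivalently ${\rm md}_{\varphi(x)}(\varphi^{-1})$ is a Euclidean norm for a.e.\ $x\in U$, for each $(U,\varphi)\in\mathscr A$. I must produce, for each $\varepsilon>0$, an atlas of $(1+\varepsilon)$-bi-Lipschitz rectifiable charts. The idea is to refine each chart $(U,\varphi)$ further. Partition $\varphi(U)$ into countably many Borel pieces on each of which the Euclidean norm ${\rm md}_z(\varphi^{-1})$ is, up to a fixed $(1+\varepsilon/3)$ factor, \emph{constant} in $z$ (possible by measurability of $z\mapsto{\rm md}_z(\varphi^{-1})$ as an element of the (separable, since finite-dimensional) space of norms on $\R^n$, using a countable $(1+\varepsilon/3)$-net in the Banach--Mazur-type sense among Euclidean norms). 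On such a piece, ${\rm md}_z(\varphi^{-1})$ is comparable to a single fixed inner-product norm $q$ on $\R^n$; post-composing $\varphi^{-1}$ with the linear map $T$ that sends $q$ to the standard Euclidean norm, i.e.\ replacing $\varphi$ by $T^{-1}\circ\varphi$, we get a new chart whose metric differential is $(1+\varepsilon/3)$-close to Euclidean at a.e.\ point. Then I invoke the Kirchheim differentiability theorem (already cited, \cite{Kirchheim94}): a Lipschitz map whose metric differential is $(1+\varepsilon/3)$-close to a fixed norm at a.e.\ point is, after further decomposing the domain into countably many Borel pieces on which the first-order Taylor expansion \eqref{eq:concretedifferential}/\eqref{eq:metricdifferential} holds with a uniform error modulus, $(1+\varepsilon)$-bi-Lipschitz on each piece for the rescaled metric; this is the standard ``Lipschitz maps are bi-Lipschitz off small sets'' decomposition, relative here to the ambient embedding $\iota\colon X\hookrightarrow\ell^\infty$ rather than $X$ itself, but since $\varphi^{-1}=\psi|_U$ with $\psi$ Lipschitz into $\ell^\infty$ this is exactly the setting of Kirchheim's theorem. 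The resulting charts are $(1+\varepsilon)$-bi-Lipschitz rectifiable (the absolute continuity of the push-forward measure is preserved under linear changes of coordinates and Borel restriction, cf.\ \eqref{eq:density}), and their domains cover $X$ up to a null set. The main work is thus in the decomposition argument: simultaneously (i) making the metric differential nearly constant and nearly Euclidean, (ii) making the Taylor error modulus uniform, and (iii) bookkeeping the countably many exceptional null sets; each step is routine measure theory, but the combination must be arranged in the right order so that the $(1+\varepsilon)$ constant genuinely comes out at the end.
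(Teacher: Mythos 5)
Your proof follows the same strategy as the paper's in both directions: atlas-independence of the fiber norms plus an $\varepsilon\to 0$ limit over $(1+\varepsilon)$-bi-Lipschitz atlases for ($\Rightarrow$), and a Kirchheim-type decomposition combined with a linear straightening for ($\Leftarrow$). The paper obtains your ($\Leftarrow$) decomposition more directly by citing \cite[Lemma 4]{Kirchheim94}, which already packages the ``near-constant metric differential with uniform first-order Taylor error'' into a partition of the chart domain into $(1+\delta)$-bi-Lipschitz pieces with respect to auxiliary norms ${\rm n}_i$, so that the Hilbertian hypothesis only enters a posteriori to replace each ${\rm n}_i$ by an inner-product norm and compose with a single linear isometry $T_i$; your version re-derives this packaging by hand but lands on the same argument.
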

\begin{proof}
    We first prove the ``only if''-direction. For each $\varepsilon>0$, consider a rectifiable atlas $\mathscr A_\varepsilon$ of $(1+\varepsilon)$-bi-Lipschitz charts. The norm of \(\mm\)-a.e.\ fiber of $T_{\mathscr A_\varepsilon}^{*}X$
is $(1+\varepsilon)$-bi-Lipschitz to an inner product norm.
Since (the isometry classes) of the fiber norms are independent of
the chosen atlas, by sending $\varepsilon\to 0$ we obtain that the
fiber norms must in fact be induced by inner products.

Next, we claim that the ``if''-direction holds. We fix an atlas $\mathscr{A}$ and let $\delta > 0$. Consider a rectifiable chart $( U, \varphi ) \in \mathscr{A}$ of dimension $n$.

By \cite[Lemma 4]{Kirchheim94} there exists a Borel decomposition $( U_{i} )_{ i = 0 }^{ \infty }$ of $U$ and norms $( { \rm n_{i} } )_{ i = 1 }^{\infty}$ on $\mathbb{R}^{n}$ such that $\varphi|_{U_{i}} \colon U_{i} \rightarrow ( \mathbb{R}^{n}, {\rm n}_{i} )$ is a $( 1 + \delta )$-bi-Lipschitz embedding
for every $i \geq 1$ and $\mathcal{L}^{n}( \varphi( U_{0} ) ) = 0$. Hence $\mm( U_{0} ) = 0$. Up to enlarging $U_{0}$ and relabeling, we may also assume that $\mm( U_{i} ) > 0$ for every $i \geq 1$. Then for every $i \geq 1$
and $\mathcal L^n$-almost every $x \in \varphi( U_{i} )$, the metric differential of $\varphi^{-1}$ at \(x\) is $( 1 + \delta )$-comparable to the norm ${\rm n}_{i}$.
Since $\mm( U_{i} ) > 0$ and $\varphi_{*}( \mm|_{U} )\ll\mathcal{L}^{n}|_{ \varphi(U ) }$, the point $x$ can be chosen in such a way that the metric differential is induced by an inner product. After replacing ${\rm n}_{i}$ by the metric differential at such a point $x$, and relabeling the norm ${\rm n}_{i}$, the map $\varphi|_{U_{i}} \colon U_{i} \rightarrow ( \varphi( U_{i} ), {\rm n}_{i} )$ is $( 1 + \delta )^{2}$-bi-Lipschitz.
Then there exists an isometry $T_{i} \colon ( \mathbb{R}^{n}, {\rm n}_{i} ) \rightarrow \mathbb{R}^{n}$, and it follows that $\varphi_{i} = T_{i} \circ \varphi|_{ U_{i} }$ is a $( 1 + \delta )^{2}$-bi-Lipschitz chart of $X$. 
Since $\mm( U_{0} ) = 0$ and $( U, \varphi ) \in \mathscr{A}$ was an arbitrary chart, we can construct this way
a $( 1 + \delta )^{2}$-bi-Lipschitz atlas for $X$ from $\mathscr{A}$. The claim follows by arbitrariness of \(\delta>0\).
\end{proof}
We recall from the introduction that given a rectifiable decomposition $\left\{ A_{k} \right\}_{ k = 1 }^{\infty}$ up to a negligible set, with each $A_{k}$ $k$-rectifiable, we let $T_{GH} X = \bigsqcup_{ k = 1 }^{ \infty } A_{k} \times \mathbb{R}^{ k }$. We endow each $A_{k} \times \mathbb{R}^{ k }$ with the Euclidean norm. Following \cite[Definition 4.5]{GP16}, we introduce the following notion. 
\begin{definition}[Gromov--Hausdorff tangent module]
	Let \((X,d,\mm)\) admit a strongly rectifiable decomposition. We define the \textbf{Gromov--Hausdorff tangent module}
	\(L^2(T_{GH}X)\) as the space of all those measurable sections \(v\)
	of \(T_{GH}X\) (considered up to \(\mm\)-a.e.\ equality) that satisfy
	\[
	\int\big|v(x)\big|^2\,\d\mm(x)<+\infty.
	\]
	The space \(L^2(T_{GH}X)\) is a \(L^2(\mm)\)-normed \(L^\infty(\mm)\)-module
	if endowed with the pointwise norm
	\[
	|v|(x)\coloneqq\big|v(x)\big|,\quad\text{ for }\mm\text{-a.e.\ }x\in X.
	\]
\end{definition}
As a consequence of the previously discussed results, we can provide an
alternative proof of Theorem \ref{thm:GH}, which is one of the main
results of \cite{GP16} (namely \ \cite[Theorem 5.1]{GP16}).
\begin{proof}[Proof of Theorem \ref{thm:GH}]
	Let \(\mathscr A=\big\{(U_k,\varphi_k)\big\}_{k\in\N}\) be an atlas
	on \((X,d,\mm)\). Without loss of
	generality we may assume that each set \(U_k\) is compact. Given any \(k\in\N\), let us fix
	an orthonormal basis \(v^k_1,\ldots,v^k_{n_k}\) for \(\Gamma_2(T_{\mathscr A}X)\)
	on \(U_k\), namely, for every \(j,\ell=1,\ldots,n_k\) it holds that
	\(\langle v^k_j,v^k_\ell\rangle=\delta_{j\ell}\) in the \(\mm\)-a.e.\ sense
	on \(U_k\). The elements \(v^k_j\) can be seen as \(\mm\)-a.e.\ defined
	maps \(v^k_j\colon U_k\to\R^{n_k}\). For \(\mm\)-a.e.\ \(x\in U_k\), we
	denote by \(\phi_x\colon\R^{n_k}\to\R^{n_k}\) the unique linear isomorphism
	with \(\phi_x\big(v^k_j(x)\big)={\rm e}_j\) for all \(j=1,\ldots,n_k\),
	where \(\{{\rm e}_1,\ldots,{\rm e}_{n_k}\}\) stands for the canonical basis of
	\(\R^{n_k}\). Let us define the operator
	\({\rm I}\colon\Gamma_2(T_{\mathscr A}X)\to L^2(T_{GH}X)\) as follows:
	given any \(v\in\Gamma_2(T_{\mathscr A}X)\), we set
	\[
	{\rm I}(v)(x)\coloneqq\phi_x\big(v(x)\big)\in\R^{n_k},
	\quad\text{ for every }k\in\N\text{ and }\mm\text{-a.e.\ }x\in U_k.
	\]
	To check that \({\rm I}(v)\) is (the equivalence class of) a measurable
	section of \(T_{GH}X\), observe that
	\[
	\phi_x\big(v(x)\big)=\sum_{k\in\N}\nchi_{U_k}(x)\,\phi_x\bigg(\sum_{j=1}^{n_k}
	\big\langle v(x),v^k_j(x)\big\rangle\,v^k_j(x)\bigg)
	=\sum_{k\in\N}\nchi_{U_k}(x)\sum_{j=1}^{n_k}\big\langle v(x),
	v^k_j(x)\big\rangle\,{\rm e}_j
	\]
	holds for \(\mm\)-a.e.\ \(x\in X\). Moreover, \(\mm\)-a.e.\ it holds that
	\[
	\big|{\rm I}(v)\big|^2=\sum_{k\in\N}\nchi_{U_k}\bigg|\sum_{j=1}^{n_k}
	\langle v,v^k_j\rangle\,v^k_j\bigg|^2=\sum_{k\in\N}\nchi_{U_k}
	\sum_{j=1}^{n_k}\langle v,v^k_j\rangle^2
	=\sum_{k\in\N}\nchi_{U_k}\bigg|\sum_{j=1}^{n_k}\langle v,v^k_j\rangle\,
	{\rm e}_j\bigg|^2=|v|^2.
	\]
	This grants that \(\rm I\) maps \(\Gamma_2(T_{\mathscr A}X)\) to
	\(L^2(T_{GH}X)\) and preserves the pointwise norm. Being
	\(\phi_x\) a linear isomorphism for \(\mm\)-a.e.\ \(x\in X\),
	we deduce that \(\rm I\)  is an isometric isomorphism of \(L^2(\mm)\)-normed
	\(L^\infty(\mm)\)-modules. Therefore, calling \(\iota\colon L^2(TX)\to
	\Gamma_2(T_{\mathscr A}X)\) the isometric embedding given by Theorem
	\ref{thm:isomA}, we conclude that the composition
	\(\mathscr I\coloneqq{\rm I}\circ\iota\) is an isometric embedding.
\end{proof}

\section{Lipschitz differentiability spaces}\label{sec:lds}
In this section we prove Theorem \ref{thm:module:LDS}. To this end, we note that in any metric measure space $X$ whose porous sets have zero measure, for every $f \in \LIP(X)$ and $U \subset X$ Borel, $\lip( f|_{U} ) = \lip( f )$ $\mm$-a.e.\ on $U$ \cite[Proposition 2.8]{BS13}. In particular, Lipschitz extensions of weak charts self-improve to strong charts (compare \eqref{eq:differential:OG} and \eqref{eq:lds}). Since porous sets in Lipschitz differentiability spaces are negligible by \cite[Theorem 2.4]{BS13}, it follows that $X$ is LDS if and only if it is weak LDS and porous sets have null measure.
\begin{proof}[Proof of Theorem \ref{thm:module:LDS}]
By Theorem \ref{cor:BKO}, $X$ is weak LDS satisfying \(\emph{(1)}\) in Theorem \ref{thm:isomB} for every $p' \geq p$. The proof is complete after we verify that porous sets are negligible. To this end, let $S\subset X$ be porous in $X$. By Remark \ref{rmk:por} we have $\lip( g )(x)>0$ for every $x\in S$, where $g(y)\coloneqq d(y,S)$, while $\lip(g) \leq \omega( |Dg|_{p} )$ yields $\lip(g)= 0$ $\mm$-a.e.\ on $S$ by \eqref{eq:sobloc}. Hence $\mm(S) = 0$.
\end{proof}

We discuss some known conditions implying \eqref{eq:BKO}. Firstly, as noted in the introduction, being a Lipschitz differentiability space does not imply \eqref{eq:BKO}. On the other hand, having a doubling measure and $p$-Poincar\'e inequality does, by the results in \cite{Cheeger00}.

In fact, the doubling condition and Poincar\'e inequality can be substantially weakened. A suitable asymptotic version of the Poincar\'e inequality (with comparable notions introduced in \cite{BateLi18} and also \cite{EB19}) imply \eqref{eq:BKO}. We say that $X$ has an {\bf asymptotic non-homogeneous $p$-Poincar\'e inequality} (asymptotic $p$-NPI), if the measure $\mm$ vanishes on porous sets, and $X$ has a countable partition $\{B_i\}$ up to a null set, with constants $\lambda_i>0$, moduli of continuity $\omega_{i}$, and $\epsilon_i\colon[0,\infty)\to \R$ satisfying $\lim_{t\to 0}\epsilon_i(t)/t=0$, so that
\begin{equation}\label{eq:asymptotic:poincare}
\aint{ B(x,r) }
| f - f_{ B(x,r) } |
\,\d\mm
\leq
r\,\omega_{i}\left(
\left(    
\aint{ B( x, \lambda_{i}r ) }
|Df|_p^{p}
\,\d\mm
\right)^{1/p}\right)
+
\epsilon_{i}( r )
\end{equation}
for $\mm$-a.e.\ $x \in B_i$ and $f \in \LIP_{bs}(X)$. Here \(f_{B(x,r)}\) stands for the average integral \(\aint{B(x,r)}f\,\d\mm\). 

\begin{proposition}\label{prop:sufcon}
Let $1<p<\infty$. Suppose $X$ has an asymptotic $p$-Poincar\'e inequality. Then $X$ satisfies \eqref{eq:BKO}.
\end{proposition}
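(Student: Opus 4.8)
The plan is to show that each of the two hypotheses — an asymptotic $p$-Poincar\'e inequality and $p$-thick quasiconvexity — forces the pointwise inequality $\lip(f)(x)\le \omega_x(|Df|_p(x))$ of \eqref{eq:BKO} at $\mm$-a.e.\ point, working separately on the two cases.

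\emph{Case of thick quasiconvexity.} Suppose $\lip(f)$ were strictly larger than $C|Df|_p$ on a set $A$ of positive measure, for the quasiconvexity constant $C$; after a further decomposition we may assume $|Df|_p$ is bounded on $A$ and $\lip(f)\ge (C+\delta)\,\sup_A|Df|_p + c$ on $A$ for suitable $\delta,c>0$. The idea is to run the standard ``upper gradient along quasiconvex paths'' estimate: for a path $\gamma$ joining points $x,y\in A$ with $\ell(\gamma)\le Cd(x,y)$ one has $|f(x)-f(y)|\le \int_\gamma |Df|_p\,ds \le \big(\sup|Df|_p\big)\ell(\gamma)\le C\big(\sup|Df|_p\big)d(x,y)$ up to controlling the portion of $\gamma$ outside $A$, which is arranged by a Lusin-type argument (restrict to a compact subset of $A$ of almost full density and use that the non-$p$-negligible family of quasiconvex paths must contain a path hitting $A$ densely). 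Passing to the limit $y\to x$ along such a sequence of near-endpoints shows $\lip(f)(x)\le C|Df|_p(x)$, contradicting the choice of $A$. Thus \eqref{eq:BKO} holds with the constant modulus $\omega_x(t)=Ct$. The cleanest route is to invoke that thick quasiconvexity implies a bound of the form $\lip f \le C|Df|_p$ directly; this is essentially \cite[Definition 1.3]{CS20} combined with the upper gradient inequality, so I would cite that and only sketch the argument.

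\emph{Case of the asymptotic $p$-NPI.} Fix $i$ and work on $B_i$; since $\mm$ vanishes on porous sets, Theorem \ref{thm:porous:infdoub} gives that $\mm$ is infinitesimally doubling, so the Lebesgue differentiation theorem applies. Fix $f\in\LIP_{bs}(X)$ and a Lebesgue point $x\in B_i$ of $|Df|_p^p$ which is also an accumulation point. For $y$ near $x$, estimate $|f(x)-f(y)|$ by a telescoping sum over balls $B(x,2^{-j}d(x,y))$, bounding each increment $|f_{B(x,2^{-j}r)}-f_{B(x,2^{-(j+1)}r)}|$ by a constant times the oscillation average $\aint{B(x,2^{-j}r)}|f-f_{B(x,2^{-j}r)}|$, and then apply \eqref{eq:asymptotic:poincare}. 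Summing the resulting geometric series $\sum_j 2^{-j}r\,\omega_i\big((\aint{B(x,\lambda_i 2^{-j}r)}|Df|_p^p)^{1/p}\big) + \sum_j\epsilon_i(2^{-j}r)$ and dividing by $d(x,y)\approx r$, then letting $y\to x$: the error terms die because $\epsilon_i(t)/t\to 0$, and by the Lebesgue point property the averages $\aint{B(x,\lambda_i 2^{-j}r)}|Df|_p^p$ converge to $|Df|_p(x)^p$, so continuity and monotonicity of $\omega_i$ give $\lip(f)(x)\le C_i\,\omega_i(|Df|_p(x))$ for a dimensional/summation constant $C_i$. Absorbing $C_i$ into a new modulus $\omega_x$ (depending only on the index $i$ with $x\in B_i$) yields \eqref{eq:BKO}.

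The main obstacle is the telescoping estimate in the asymptotic $p$-NPI case: one must make the passage to the pointwise limit $y\to x$ rigorous with the two competing families of error terms, and in particular control the summation so that the modulus of continuity structure is preserved rather than merely producing a bound with a multiplicative constant in front of $\omega_i$ — this is handled by noting $\omega_i(Ct)\le \tilde\omega_i(t)$ is again a modulus of continuity after passing to a possibly larger one, and by choosing the radii dyadically so the geometric decay beats the (uniform on bounded sets) growth of $\omega_i$. The quasiconvexity case is comparatively routine once the Lusin/density reduction is in place, and for both cases the hard analytic content can be quoted from \cite{BateLi18,EB19,DC12,CS20} with only a sketch given here.
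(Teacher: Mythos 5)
Your treatment of the asymptotic $p$-NPI case is essentially identical in content to the paper's: the paper cites \cite[Lemma 4.10]{BateLi18} together with the Lebesgue differentiation theorem, while you unpack that lemma as a dyadic telescoping argument. Both yield a pointwise bound $\lip f(x)\le C_x\,\omega_i(|Df|_p(x))$ with a constant $C_x$ absorbed into the modulus, so this direction is fine (and your own caveat about the doubling constant entering $C_x$ is exactly right — infinitesimal doubling, which follows from null porosity via Theorem~\ref{thm:porous:infdoub}, supplies the telescoping constants only asymptotically and pointwise, which is why the modulus $\omega_x$ is allowed to depend on the point).

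The thick quasiconvexity case is where you diverge from the paper, and where your sketch has a genuine gap. You set up a contradiction on a set $A$ where $\lip f > C|Df|_p$ by a margin, and then estimate $\int_\gamma |Df|_p\,\d s\le(\sup_A|Df|_p)\,\ell(\gamma)$ along quasiconvex paths; but you need the supremum of $|Df|_p$ over the \emph{entire path}, not merely over its intersection with $A$, and quasiconvex curves can and do wander off any given positive-measure set. You acknowledge this and propose a ``Lusin-type argument'' to ensure ``paths hitting $A$ densely,'' but thick quasiconvexity only guarantees a non-$p$-negligible family of short paths between $E$ and $F$; it does not by itself say anything about the arc-length those paths spend in $A$. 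Making this rigorous would require a Fuglede-type selection or a modulus estimate, and it is not clear how to run it without further work. Moreover, even granting the Lusin step, you would obtain $\lip f(x)\le C\sup_A|Df|_p$, so one would still need to shrink $A$ to neighborhoods of small oscillation of $|Df|_p$ (approximate continuity), which your sketch does not address. The paper sidesteps all of this by approximating $|Df|_p$ \emph{from above} by simple functions $\rho=\sum a_i\nchi_{U_i}$ with $U_i$ open and containing $\mathrm{spt}\,f$: for $x\in U_i$ and $y$ close, all quasiconvex paths from $B(x,\delta)$ to $B(y,\delta)$ with $\delta$ small lie entirely in $U_i$, where $\rho\equiv a_i$, so the upper gradient inequality gives $\lip f(x)\le Ca_i=C\rho(x)$ directly, and one concludes by taking $\rho\to|Df|_p$ a.e. This keeps the argument direct (no contradiction), uses openness of the level sets in place of your Lusin/density reduction, and requires no control on the paths beyond what thick quasiconvexity literally provides. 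I would recommend adopting the simple-function scheme; it is both shorter and closes the hole.
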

\begin{proof}
Assume $X$ has an asymptotic $p$-NPI. Using  \cite[Lemma 4.10]{BateLi18} and Lebesgue's differentiation theorem -- both of which are applicable since porous sets have zero $\mm$-measure -- we conclude from \eqref{eq:asymptotic:poincare} that $\lip f(x) \le C_{x} \omega_i( |Df|_p )(x)$ $\mm$-a.e.\ $x \in B_i$ for any $f\in\LIP_{bs}(X)$, for some $C_{x}$ independent of $f$, establishing \eqref{eq:BKO}.
\end{proof}
\begin{remark}{\rm
As we see from the proof, we can allow $\omega_{i}$, $\lambda_{i}$ and $\epsilon_{i}$ in \eqref{eq:asymptotic:poincare} to depend on the point $x \in X$ even within each $B_{i}$, as long as this is independent of $f \in \LIP_{bs}(X)$.
\fr}\end{remark}
\noindent\textbf{Acknowledgements.}
The first named author was supported by the Academy of Finland,
project number 308659, and by the Vilho, Yrj\"{o} and Kalle V\"{a}is\"{a}l\"{a} Foundation.
The second named author was supported by the Academy of Finland, project
number 314789, and by the Balzan project led by Prof.\ Luigi Ambrosio.
The third named author was supported by the Swiss National Foundation, grant no.\ 182423. The authors wish to thank Sylvester Eriksson-Bique for helpful discussions and the reviewers for their useful suggestions.
\def\cprime{$'$} \def\cprime{$'$}

\end{document}